\theoremstyle{plain}
\newtheorem{theorem}{Theorem}[section]
\newtheorem{lemma}[theorem]{Lemma}
\newtheorem{prop}[theorem]{Proposition}
\theoremstyle{definition}
\newtheorem{remark}[theorem]{Remark}
\newcommand{\cball}{\overline{\mathds{B}}}
\newcommand{\real}{\mathds{R}}
\newcommand{\Pp}{\mathds{P}}
\newcommand{\Ee}{\mathds{E}}
\newcommand{\I}{\mathds{1}}
\newcommand{\Dcal}{\mathcal{D}}
\newcommand{\Fcal}{\mathcal{F}}
\newcommand{\tinybullet}{\scriptscriptstyle\bullet}
\begin{document}

\title[Time changed multidimensional Wiener process]{Functional limit theorems for a time-changed multidimensional  Wiener process}

\author{Yuliya Mishura}
\address{Department of Probability, Statistics and Actuarial Mathematics, Taras Shevchenko National University of Kyiv, 01601  Kyiv, Ukraine}
\email{yuliyamishura@knu.ua}

\author{Ren\'e L.\ Schilling}
\address{Institut f\"{u}r Mathematische Stochastik, Fakult\"{a}t Mathematik, TU Dresden, 01062 Dresden, Germany}
\email{rene.schilling@tu-dresden.de}

\begin{abstract}
     We study the asymptotic behaviour of a properly normalized time-changed multidimensional Wiener process; the time change is given by an additive functional of the Wiener process itself. At the level of generators, the time change means that we consider the Laplace operator -- which generates a multidimensional Wiener process -- and multiply it by a (possibly degenerate) state-space dependent intensity. We assume that the intensity admits limits at infinity in each octant of the state space, but the values of these limits may be different. Applying a functional limit theorem for the superposition of stochastic processes, we prove functional limit theorems for the normalized time-changed multidimensional Wiener process. Among the possible limits there is a multidimensional analogue of skew Brownian motion.
\end{abstract}

\dedicatory{To the memory of Yuri Kondratiev}

\keywords{Multidimensional Wiener process; time change; functional limit theorem; multidimensional skew Brownian motion}

\subjclass[2020]{\emph{Primary:} 60J65. \emph{Secondary:} 60J60; 60J55; 60F05; 60F17.}

\thanks{The authors thank Michael R\"{o}ckner for fruitful discussions on this topic. The first-named author was supported by the Swedish Foundation for Strategic Research (Grant No. UKR24-0004), the Japan Science and Technology Agency CREST project (Grant No. 811JPMJCR2115) and the ToppForsk project of the Research Council of Norway with the title STORM: Stochastics for Time-Space Risk Models (Grant No.\ 274410). The second-named author was supported by the 6G-life and ScaDS.AI Dresden/Leipzig projects.}

\maketitle

\section{Introduction}\label{intro}
In this paper we study the asymptotic behaviour of a normalized and time-changed multidimensional Wiener process. Our motivation comes from the study of solutions to the following type of parabolic Cauchy problems
\begin{gather}\begin{alignedat}{3}\label{cauchy}
    \frac{\partial}{\partial t} u(t,x) &= \lambda(x)\Delta u(t,x), &\quad& t\geq 0,\; x\in \real^d,\\
                                u(0,x) &= f(x), &\quad& t=0,\; x\in \real^d,
\end{alignedat}\end{gather}
where the coefficient $\lambda$ can be irregular and even degenerate.

Following the suggestions of Yuri Kondratiev, we studied in the joint paper~\cite{KMS} the Cauchy problem \eqref{cauchy} in dimension one, $d=1$. Here we are going to generalize the problem \eqref{cauchy} to higher dimensions. Quite often it is assumed in such problems that the diffusion coefficient is uniformly elliptic. In the situation considered here this reduces to the assumption that  $\lambda(x)$ is bounded away from zero; degenerate problems are often treated with the method of vanishing viscosity, see e.g.\ Ole\u{\i}nik \& Radkevi\v{c}~\cite[\S~III.2]{ole-rad} and Bogachev \emph{et al.}~\cite[\S 6.7.(ii)]{bog-et-al}, and the literature mentioned in these references.

An alternative method to study such problems is to use a probabilistic approach. Assume that we can construct a Markov process $X$, starting from any point $x\in\real^d$, and with infinitesimal generator $\lambda(x)\Delta$. In this situation, we have the following stochastic representation of the parabolic Cauchy problem \eqref{cauchy}:
\begin{gather*}
    u(t,x)= \Ee^x [f(X_t)].
\end{gather*}
Yet, a direct construction of this process may be difficult (if it is possible at all), if one wants to use stochastic differential equations (SDEs) in the case when the diffusion coefficient $\sqrt{\lambda(x)}$ is irregular and degenerate. Typically, diffusions in random media lead to random coefficients $\lambda =\lambda(\omega,x)$, which have an additive form with respect to the points in the medium. This structure corresponds to the energy of an diffusing particle in the medium.  Such coefficients cannot be uniformly positive, in general. A standard reference for stochastic differential equations with singular coefficients is Cherny \& Engelbert~\cite{cher}.

A possible way out is offered by the theory of Dirichlet forms, see e.g.\ Fukushima \emph{et al.}~\cite{fot}, but the resulting process may be defined only up to an exceptional set. Just as in the one-dimensional case, see~\cite{KMS}, we will use a different approach, namely additive functionals and random time changes. Recall the following statement which is proved differently and for different purposes in Böttcher \emph{et al.}~\cite{BSW} and Kurtz~\cite{kurtz}.
\begin{prop}\label{prop1}
    Let $\left\{X_t,\, t\geq 0\right\}$ be a $d$-dimensional Feller process with natural filtration $\Fcal^X$, $\Fcal_t^X = \sigma(X_s, s\leq t)$, and generator $(A, \Dcal(A))$. Assume that the function $\lambda\in C_b(\real^d)$ is real-valued and strictly positive. Set
    \begin{gather*}
        S_X(t,\omega) := \int_0^t\frac{ds}{\lambda(X_s(\omega))}
    \end{gather*}
    and denote by $\tau_t(\omega) = \inf\{u>0: S_X(u,\omega)>t\}$ $(\inf\emptyset:=+\infty)$ the generalized right-continuous inverse. Then the time-changed process
    $
        \left\{X_{\tau_t},\, t\geq 0\right\}
    $
    with filtration
    $
        \left\{\Fcal_{\tau_t}^X,\,t\geq 0\right\}
    $
    is again a Feller process, and its generator is the closure of $(\lambda(\cdot)A, \Dcal(A))$.
\end{prop}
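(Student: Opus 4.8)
The plan is to reduce the statement to the classical theory of random time changes of Markov processes (developed, for different purposes, in \cite{BSW} and \cite{kurtz}), the only genuinely new point being the bookkeeping that turns $A$ into $\lambda(\cdot)A$. Write $\|\lambda\|_\infty=\sup_x\lambda(x)$. First I would record the path properties of the clock $S_X$: fixing $\omega$ and using that $\lambda$ is continuous and strictly positive while $s\mapsto X_s(\omega)$ is c\`adl\`ag (hence has relatively compact trace on bounded intervals), the integrand $1/\lambda(X_s(\omega))$ is bounded on every $[0,t]$ and bounded below by $1/\|\lambda\|_\infty>0$. Thus $t\mapsto S_X(t,\omega)$ is real-valued, absolutely continuous, strictly increasing, with $S_X(0,\omega)=0$ and $S_X(t,\omega)\uparrow\infty$; in particular $\tau_\cdot(\omega)$ is the genuine continuous strictly increasing inverse function of $S_X(\cdot,\omega)$, with $\tau_t\le\|\lambda\|_\infty t$, and $\{\tau_t\le u\}=\{S_X(u)\ge t\}\in\Fcal^X_u$, so each $\tau_t$ is an $\Fcal^X$-stopping time and $\{\Fcal^X_{\tau_t}\}_{t\ge0}$ is an increasing, right-continuous filtration.

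The strong Markov and Feller properties of $Y:=X_{\tau_\cdot}$ I would take from time-change theory: since $S_X$ is a perfect, continuous, strictly increasing additive functional comparable to the identity, $Y$ is a c\`adl\`ag strong Markov process for $\{\Fcal^X_{\tau_t}\}$ and its transition semigroup $T_tf(x):=\Ee^x[f(X_{\tau_t})]$ consists of positive contractions on $C_\infty(\real^d)$. Of the remaining requirements, strong continuity will come out of the computation below; the invariance $T_t(C_\infty(\real^d))\subseteq C_\infty(\real^d)$ is the point I expect to be the main technical obstacle, because it requires controlling the evaluation $\omega\mapsto\omega(\tau_t[\omega])$ at a path-dependent time. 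Here one combines the weak continuity of $x\mapsto\Pp^x$ on the Skorokhod space $D([0,\infty),\real^d)$ with the continuity of $\omega\mapsto\tau_t[\omega]$ (the inverse of the $J_1$-continuous functional $\omega\mapsto\int_0^{\cdot}\lambda(\omega_s)^{-1}\,ds$), the bound $\tau_t\le\|\lambda\|_\infty t$, and the Feller property of $X$ for the ``vanishing at infinity'' part --- or one simply quotes \cite{BSW,kurtz} for the full Feller property.

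For the generator, fix $f\in\Dcal(A)$. By Dynkin's formula $M^f_t:=f(X_t)-f(X_0)-\int_0^t Af(X_s)\,ds$ is an $\Fcal^X$-martingale; since $\tau_t\le\|\lambda\|_\infty t$ is a bounded $\Fcal^X$-stopping time, Doob's optional sampling theorem shows that $t\mapsto M^f_{\tau_t}$ is a martingale for $\{\Fcal^X_{\tau_t}\}$. As $S_X$ is absolutely continuous with derivative $1/\lambda(X_\cdot)\ge\|\lambda\|_\infty^{-1}>0$, its inverse $\tau$ is Lipschitz with $\tau'_r=\lambda(X_{\tau_r})$, and the substitution $s=\tau_r$ in the integral gives
\begin{gather*}
    M^f_{\tau_t}=f(Y_t)-f(Y_0)-\int_0^t\bigl(\lambda\cdot Af\bigr)(Y_r)\,dr .
\end{gather*}
Taking $\Ee^x$ and using the Markov property of $Y$ yields $T_tf-f=\int_0^t T_r(\lambda\cdot Af)\,dr$, whence $\|T_tf-f\|_\infty\le t\,\|\lambda\|_\infty\|Af\|_\infty$; as $\Dcal(A)$ is dense and the $T_t$ are contractions, this gives the strong continuity of $T$, and then $\tfrac1t(T_tf-f)\to\lambda\cdot Af$ in $C_\infty(\real^d)$, so that $\Dcal(A)\subseteq\Dcal(A_Y)$ and $A_Yf=\lambda(\cdot)Af$ for $f\in\Dcal(A)$, where $(A_Y,\Dcal(A_Y))$ is the Feller generator of $Y$. (Note $\lambda\cdot Af\in C_\infty(\real^d)$ because $\lambda\in C_b(\real^d)$ and $Af\in C_\infty(\real^d)$.)

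Finally I would show that $\Dcal(A)$ is a core, i.e.\ that the closure $B$ of $(\lambda(\cdot)A,\Dcal(A))$ equals $(A_Y,\Dcal(A_Y))$. Since $A_Y$ is closed and extends $\lambda A$, we have $B\subseteq A_Y$; since $\lambda A$ satisfies the positive maximum principle (a nonnegative maximum of $f\in\Dcal(A)$ at $x_0$ gives $Af(x_0)\le0$, hence $\lambda(x_0)Af(x_0)\le0$) it is dissipative, so $B$ is dissipative and $\alpha-B$ is injective for every $\alpha>0$. It therefore suffices to prove that $(\alpha-\lambda A)\Dcal(A)=C_\infty(\real^d)$ for some $\alpha>0$: then $\alpha-B$ is bijective, and comparing with the bijection $\alpha-A_Y$ (which agrees with $\alpha-B$ on $\Dcal(B)$) forces $\Dcal(B)=\Dcal(A_Y)$. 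If $\lambda$ is moreover bounded away from $0$, put $q:=\alpha/\lambda\in C_b(\real^d)$ with $\inf q>0$; then $A-q$ is a bounded perturbation of $A$, hence generates a $C_0$-semigroup on $C_\infty(\real^d)$ with the \emph{same} domain $\Dcal(A)$, namely the Feynman--Kac semigroup $\widetilde T_tf(x)=\Ee^x\bigl[e^{-\int_0^t q(X_s)\,ds}f(X_t)\bigr]$, which satisfies $\|\widetilde T_t\|\le e^{-(\inf q)t}$; consequently $0\in\rho(A-q)$ and $(q-A)\Dcal(A)=C_\infty(\real^d)$, and applying the bounded invertible multiplication operator $g\mapsto\lambda g$ gives $(\alpha-\lambda A)\Dcal(A)=\lambda\cdot(q-A)\Dcal(A)=C_\infty(\real^d)$. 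For merely strictly positive $\lambda$ the same range identity follows from the arguments of \cite{BSW,kurtz}. This identifies the generator of $X_{\tau_\cdot}$ with the closure of $(\lambda(\cdot)A,\Dcal(A))$.
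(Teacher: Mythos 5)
The paper itself offers no proof of Proposition~\ref{prop1}: it is explicitly ``recalled'' from B\"ottcher \emph{et al.}~\cite{BSW} and Kurtz~\cite{kurtz}, so there is nothing in the text to compare your argument against line by line. On its own terms, your write-up is a sound reconstruction of the classical Volkonskii/Dorroh-type time-change argument: the path analysis of $S_X$ and the bound $\tau_t\le\|\lambda\|_\infty t$ are correct; the Dynkin-formula/optional-sampling computation, the substitution $ds = \lambda(X_s)\,dS_X(s)$ giving $\int_0^{\tau_t}Af(X_s)\,ds=\int_0^t(\lambda Af)(Y_r)\,dr$, the derivation of strong continuity from $T_tf-f=\int_0^tT_r(\lambda Af)\,dr$, and the dissipativity-plus-range core argument are all standard and correct. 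Two caveats are worth making explicit. First, the Feller invariance $T_t(C_\infty)\subseteq C_\infty$ and the Markov/semigroup property of $Y$ are precisely the nontrivial content of the cited results, and your sketch (continuity of $\omega\mapsto\tau_t[\omega]$ plus weak continuity of $x\mapsto\Pp^x$) glosses over the discontinuity of the evaluation $\omega\mapsto\omega(\tau_t[\omega])$ on Skorokhod space; you rightly flag this and fall back on \cite{BSW,kurtz}. Second, your range identity $(\alpha-\lambda A)\Dcal(A)=C_\infty(\real^d)$ is proved only under the extra hypothesis $\inf\lambda>0$ (otherwise $q=\alpha/\lambda$ is unbounded and the bounded-perturbation/Feynman--Kac step fails), whereas the proposition assumes only that $\lambda\in C_b$ is strictly positive; for that general case you again defer to the references. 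So the genuinely self-contained part of your proof covers the uniformly elliptic case, and the remaining cases are outsourced to exactly the same sources the paper cites --- which is acceptable here, but you should state clearly that these are citations rather than proved steps.
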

Due to the different recurrence and transience behaviour of one- and multi-dimensional diffusion processes, we restricted ourselves in the paper~\cite{KMS} to the case $d=1$; we will now consider the case where $X_t=B_t$ is a multidimensional Wiener process. Throughout, we assume that $\lambda:\real^d\to [0,\infty)$ is a positive measurable function, which is Lebesgue a.e.\ strictly positive, i.e.\ $\mathrm{Leb}\{x:\lambda(x)=0\}=0$. In our main result, Theorem~\ref{theor1-1}, we do not require that $\lambda$ is bounded or bounded away from zero. However, in fact  we assume that $\lambda(x)$ is bounded away from zero for $|x|\gg 1$, since the limit exists and is strictly positive, see assumption  \ref{vi}. Since $B_t$ has a transition density w.r.t.\ Lebesgue measure, the a.e.\ strict positivity of $\lambda$ guarantees that $S_B(t,\omega)$ is a.s.\ strictly increasing and continuous; thus, $\tau_t$ is also strictly increasing and continuous. Finally, we assume that $1/\lambda$ is locally integrable, but not necessarily of class $L^1(\real^d,dx)$. Note that under these assumptions, the time-changed process $B_{\tau_t}$ is both a diffusion process and a local martingale w.r.t.\ the filtration $\Fcal_{\tau_t}$; in fact, writing $Y_t = B_{\tau_t}$, we have
\begin{gather}\label{mart-repres}
    Y_t=Y_0+\int_0^t \sqrt{\lambda(Y_s)}\,d\tilde{B}_s,
\end{gather}
with some $d$-dimensional Wiener process $\tilde{B}$.

Our aim is to establish functional limit theorems for the process $B_{\tau_t}$ without resorting to the martingale representation \eqref{mart-repres} or the Feller property and the form of its generator; instead, we want to focus on the pathwise representation as superposition of two stochastic processes. Using this approach, we need neither that the time change $\tau$ and the original process $X$ are independent (e.g.\ as in Bochner's subordination) nor that $\lambda$ is regular (as in the Feller case) or non-degenerate (as in the SDE approach). There exists a substantial literature on random-time changes, using various perspectives. Let us mention a few of them: The monograph by Fukushima \emph{et al.}~\cite{fot} treats time changes of symmetric Markov processes, and Harlamov~\cite{harl} describes the representation  of semi-Markov processes in the form of a Markov process, transformed by a time change. The paper~\cite{Magdziarz} discusses the asymptotic behaviour of the standard one-dimensional Brownian motion time changed by the (generalized) inverse of an (independent) subordinator. A detailed bibliography on the properties of time-changed processes can be found in the paper~\cite{amp}.

Our paper is organized as follows. Since it is important to have a properly defined time change for any $t>0$, we study in Section~\ref{sec-2} the asymptotic behaviour of some additive functionals of integral type for a multi-dimensional Brownian motion, and we introduce conditions that ensure the existence of the time change $\tau_t$ for any $t>0$. Section~\ref{sec-3} contains the main results, namely functional limit theorems for a time-changed multidimensional Wiener process. In Section~\ref{sub1} we briefly recall (a modification of) a theorem on weak convergence of superpositions of stochastic processes from Silvestrov~\cite{Silvestrov}. This is then applied to our setting, first (in Section~\ref{sub2}) with the simplifying assumption that $\lambda$ is separated from zero. Then we remove this restriction, assuming instead that $\lambda$ has ``radial'' limits, or alternatively, limits in the   ``diagonal'' direction   of any octant.

\section{Asymptotic behaviour of integral functionals of\\ multi-dimensional Brownian motion}\label{sec-2}

Consider a $d$-dimensional Wiener process  $B=\{ B_t, \, t\geq0 \}$, starting from $0$, and denote by $\Fcal^B = \left\{\Fcal^{B}_{t}, \, t\geq 0 \right\}$ its natural filtration. Throughout this paper, $\lambda:\real^d\to [0,\infty)$ is a  measurable function such that $f:\real^d\to (0,\infty]$, $f(x) := 1/\lambda(x)$ is locally integrable. Under these assumptions, the following additive functional of a $d$-dimensional Wiener process is well-defined:
\begin{equation}\label{func1}
	S_B(t)
	= \int_{0}^{t} \frac{ds}{\lambda(B_s)}
	=\int_{0}^t f(B_s)\,ds.
\end{equation}
Let $e_1,\dots,e_d$ be the canonical basis of $\real^d$, $\|x\|=\left(\sum_{i=1}^dx_i^2\right)^{1/2}$ the standard Euclidean norm, and  denote by $\Delta_\alpha := \left\{ \sum_{i=1}^d(-1)^{\alpha_i}t_i e_i \::\: t_1,\dots,t_d > 0\right\}$ the open ``octants'' of $\real^d$ where $\alpha \in \{0,1\}^d$ is a multiindex. For example, in $\real^2$ the $4$ quadrants $Q_I,Q_{II},Q_{III},Q_{IV}$ correspond to $\Delta_{(0,0)}, \Delta_{(1,0)}, \Delta_{(1,1)}, \Delta_{(0,1)}$. Clearly, $\Delta_0 = \real_+^d$ with $\real_+ = (0,\infty)$. We exclude the bounding hyperplanes of the octants from our considerations, since they will not lead to any difficulties due to the smoothness of the distribution of the multidimensional Wiener process.

We are interested in conditions that ensure $\int_{0}^{\infty} \lambda(B_s)^{-1}\,ds =\infty$ a.s.  In other words, the first issue is to study the asymptotic behaviour of $\int_{0}^{t} \lambda(B_s)^{-1}\,ds$ as $t \to \infty$. In the one-dimensional case the situation is very simple: due to the recurrence of $B$, the local time at the point $a\in\real$ satisfies
\begin{equation*}
	L_{\infty} (a) = \infty \quad \text{a.s.\ for almost all} \quad a \in \real.
\end{equation*}
Therefore, by the occupation time formula,
\begin{equation*}
	\int_{0}^{\infty} \frac{ds}{\lambda(B_s)}
	= \int_{\mathds{R}} L_{\infty} (x) \frac{dx}{\lambda(x)}
	= \infty \quad \text{a.s.},
\end{equation*}
see also~\cite{KMS}.

In the case $d=2$ we apply the Kallianpur--Robbins law (see \cite{Kallianpur}),  which states that for any nonnegative $g \in   L_1(\real^2)$, $g\not\equiv 0$,
\begin{equation*}
	\Pp \left\{ \frac{2\pi}{\|g\|_{  L_1(\mathds{R}^2)} \log T} \int_0^T g(B_t) dt \leq u \right\}
	\xrightarrow[T\to\infty]{} 1-e^{-u}, u>0.
\end{equation*}
In particular, if $f=\lambda^{-1}\in   L_1(\mathds{R}^2)$ is strictly positive,
\begin{equation}\label{proba}
	\int_0^T f(B_t) dt  \to +\infty\quad\text{as}\quad T \to +\infty,
\end{equation}
in probability as well as almost surely, since the integral increases. If $1/\lambda \notin    L_1(\mathds{R}^2)$, we can choose $0 \leq f_1(x) \leq f(x)= {1}/{\lambda(x)}$, $x\in \real^2$, such that $f_1 \in   L_1(\mathds{R}^2)$. Clearly, $f_1$ satisfies \eqref{proba}, and consequently
\begin{equation*}
	\int_0^T \frac{dt}{\lambda(B_t)}  \to +\infty \quad\text{as}\quad T \to +\infty.
\end{equation*}
The case $d \geq 3$ is considered in Lemma~\ref{infin1}.

For now, we discuss a weaker result which, however, sheds some light on the situation, in which we are interested.
\begin{lemma}\label{infin}
	Let $d\geq 2$ and $\lambda:  \real^d \to (0, +\infty)$ be a measurable function, satisfying the following assumptions:
\begin{enumerate}
\item\label{i} The function  $1/\lambda$ is locally integrable;
\item\label{ii} There exist $2^d$ numbers $\{\lambda_\alpha, \alpha\in\{0,1\}^d\}$ such that
\begin{equation*} 
	\lim_{\substack{\min_{1\leq i\leq d}|x_i| \to \infty\\ x \in \Delta_\alpha}} \lambda(x)
	= \lambda_\alpha > 0, \quad \alpha\in\{0,1\}^d.
\end{equation*}
\end{enumerate}
Then
\begin{gather}\label{local1}
    \Ee (S_B(\infty))
    =\Ee \left(\int_{0}^{\infty} \frac{ds}{\lambda(B_s)}\right)
      = \infty.
   \end{gather}
\end{lemma}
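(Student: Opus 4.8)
The plan is to estimate $\Ee(S_B(\infty))$ from below by the contribution of a single octant, say $\Delta_0 = \real_+^d$, far from the origin. Fix a large radius $R$ so that, by assumption \ref{ii}, $\lambda(x) \leq 2\lambda_0$ for all $x\in\Delta_0$ with $\min_i x_i \geq R$; call this region $G_R$. Then $1/\lambda(x) \geq 1/(2\lambda_0)$ on $G_R$, and by Tonelli's theorem
\begin{gather*}
    \Ee(S_B(\infty)) = \int_0^\infty \Ee\!\left[\frac{1}{\lambda(B_s)}\right] ds
    \geq \frac{1}{2\lambda_0}\int_0^\infty \Pp(B_s \in G_R)\,ds
    = \frac{1}{2\lambda_0}\int_{G_R} \left(\int_0^\infty p_s(0,x)\,ds\right) dx,
\end{gather*}
where $p_s(0,x) = (2\pi s)^{-d/2}\exp(-\|x\|^2/(2s))$ is the transition density of $B$.

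The key step is then to show that $\int_{G_R} \int_0^\infty p_s(0,x)\,ds\,dx = \infty$. For $d\geq 3$ the inner integral is the Newtonian potential $\int_0^\infty p_s(0,x)\,ds = c_d\|x\|^{-(d-2)}$ with $c_d = \Gamma(d/2-1)/(2\pi^{d/2})$, which is finite pointwise but \emph{not} integrable over the unbounded region $G_R$: passing to the natural scaling, the set $\{x\in G_R : \|x\|\asymp 2^n\}$ has volume of order $2^{nd}$ while $\|x\|^{-(d-2)}\asymp 2^{-n(d-2)}$ there, so the contribution of each dyadic shell is of order $2^{2n}\to\infty$. (More carefully, one restricts to the sub-cone $\{x : x_i \geq \|x\|/(2\sqrt d)\text{ for all }i\}\cap G_R$ to stay inside $G_R$ while keeping a fixed solid-angle fraction of each shell.) Hence the integral diverges. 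For $d=2$ the inner integral $\int_0^\infty p_s(0,x)\,ds$ is already $+\infty$ for every $x\neq 0$ (the planar Wiener process is recurrent / has infinite Green's function), so the divergence is immediate — this also re-proves the $d=2$ remark made just before the lemma. In both cases \eqref{local1} follows.

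The main obstacle — really the only non-routine point — is making the divergence of $\int_{G_R}\|x\|^{-(d-2)}\,dx$ rigorous while respecting the octant constraint: one must check that the ``diagonal'' sub-cone of $\Delta_0$ on which $\lambda$ is controlled still captures a positive proportion of the volume of each dyadic annulus, so that the shell sums genuinely diverge rather than being killed by the geometry near the coordinate hyperplanes. This is handled by the explicit sub-cone described above, on which $\min_i x_i \geq \|x\|/(2\sqrt d)$, so that $\min_i x_i \to\infty$ as $\|x\|\to\infty$ and assumption \ref{ii} applies, while the solid angle of the sub-cone is a fixed positive constant depending only on $d$. Assumption \ref{i} (local integrability of $1/\lambda$) is not actually needed for the lower bound; it only guarantees that $S_B(t)<\infty$ for each finite $t$, so that the statement $\Ee(S_B(\infty))=\infty$ is the meaningful dichotomy. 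Note finally that, since $S_B(\infty)\geq 0$ and has infinite mean while $S_B$ is increasing, this already yields $S_B(\infty)=\infty$ with positive probability; a zero–one law argument (deferred to Lemma~\ref{infin1}) upgrades this to almost sure divergence.
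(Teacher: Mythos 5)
Your proposal is correct and follows essentially the same route as the paper: both apply Tonelli to express $\Ee(S_B(\infty))$ as an integral of $1/\lambda$ against the (truncated) Gaussian occupation density, reduce to the first octant where assumption \ref{ii} gives $1/\lambda\geq 1/(2\lambda_0)$ far from the coordinate hyperplanes, and derive divergence from the pointwise infinitude of the planar Green's function for $d=2$ and from the non-integrability of $\|x\|^{2-d}$ at infinity for $d>2$ (your dyadic shells in a sub-cone versus the paper's cubes $[A,2A]^d$ with $A\to\infty$ are the same estimate in different clothing).
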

\begin{proof}
Obviously, we have for any $t>0$
\begin{align*}
   \Ee \left(\int_0^t \frac{ds}{\lambda(B_s)}\right)
   &= (2\pi)^{-d/2}\int_{\real^d} \frac{1}{\lambda(x)} \int_0^t s^{-d/2} e^{-\frac{\|x\|^2}{2s}}\,ds\,dx
\intertext{and the change of variable $z=\frac{\|x\|^2}{2s}$, hence $s= \frac{\|x\|^2}{2z}$ and $ds= -\frac{\|x\|^2}{2z^2}\,dz$, gives}
  \Ee \left(\int_0^t \frac{ds}{\lambda(B_s)}\right)
  &= (2\pi)^{-d/2} \int_{\real^d} \frac{1}{\lambda(x)} \int_{\frac{\|x\|^2}{2t}}^{\infty} e^{-z} \frac{(2z)^{d/2}}{\|x\|^d} \frac{\|x\|^2}{2z^2} \,dz \,dx \\
  &= \pi^{-d/2} \int_{\real^d} \frac{\|x\|^{2-d}}{\lambda(x)} \int_{\frac{\|x\|^2}{2t}}^{\infty} e^{-z} z^{d/2-2} \,dz\,dx.
\end{align*}
We distinguish between two cases:

\medskip\noindent
\textbf{Case 1:} $d=2$. Then, for any $x \in \real^d$
\begin{equation*}
    I_t(x) := \int_{\frac{\|x\|^2}{2t}}^{\infty} e^{-z} z^{-1} \,dz \uparrow \infty \quad \text{as}\quad  t \to \infty.
\end{equation*}
Therefore, by monotone convergence,
\begin{equation*}
	\int_{\real^d} \frac{\|x\|^{2-d}}{\lambda(x)} I_t(x)\,dx
	= \int_{\real^2} \frac{1}{\lambda(x)} I_t(x)\,dx\xrightarrow[t\to\infty]{} \infty,
\end{equation*}
and \eqref{local1} follows.

\medskip\noindent
\textbf{Case 2:} $d>2$. Then we have
\begin{equation*}
	\int_{\frac{\|x\|^2}{2t}}^{\infty} e^{-z} z^{d/2-2}\,dz
	\xrightarrow[t\to\infty]{} \Gamma \left(\frac{d}{2} -1\right).
\end{equation*}
By assumption we have in the first octant $\Delta_0 = \real_+^d$ that
\begin{equation*}
	\frac{1}{\lambda_0} = \lim_{\substack{\min_{1\leq i\leq d}|x_i| \to \infty\\ x \in \Delta_0}} \frac{1}{\lambda(x)}.
\end{equation*}
Thus, for any $A>0$
\begin{align*}
    &\int_{\real^d} \frac{\|x\|^{2-d}}{\lambda(x)}  \int_{\frac{\|x\|^2}{2t}}^{\infty} e^{-z} z^{d/2-2} \,dz \,dx \\
    &\quad\geq  \int_{[A,\, 2A]^d} \frac{ \|x\|^{2-d}}{\lambda(x)} \int_{\frac{\|x\|^2}{2t}}^{\infty} e^{-z} z^{d/2-2} \,dz \,dx\\
    &\quad\geq \int_{\frac{2A^2d}{t}}^{\infty} e^{-z} z^{d/2-2} dz \int_{[A,\, 2A]^d}  \frac{\|x\|^{2-d}}{\lambda(x)} \,dx.
\end{align*}
Let $A_0>0$ be such that for all $x\in\real^d_+$ with $\min_{1\leq i\leq d} x_i >A_0$ we have $\frac{1}{\lambda(x)} > \frac{1}{2\lambda_0}$.(Recall that $\lambda_0>0$).   Then we have for any $A\geq A_0$
\begin{align*}
    \int_{[A,\, 2A]^d} \frac{\|x\|^{2-d}}{\lambda(x)} \,dx
    &\geq \frac{1}{2\lambda_0} \int_{[1,2]^d} A^{2-d} \|y\|^{2-d}A^d \,dy\\
    &\geq \frac{A^2}{2\lambda_0}  \int_{[1,2]^d} \|y\|^{2-d} \,dy.
\end{align*}
and
\begin{align*}
	\lim_{t\uparrow \infty} \Ee \left(\int_0^t \frac{ds}{\lambda(B_s)}\right)
    &\geq \lim_{t\uparrow \infty} \int_{\frac{2A^2d}{t}}^{\infty} e^{-z} z^{d/2-2} \,dz
    \cdot \frac{A^2}{2\lambda_0}  \int_{[1,2]^d} \|y\|^{2-d} \,dy \\
    &= \frac{\Gamma\left(\frac{d}{2} -1\right)  A^2}{2\lambda_0} \int_{[1,2]^d} \|y\|^{2-d} \,dy.
\end{align*}
Letting $A \to \infty$ gives \eqref{local1}, and  the proof is complete.
\end{proof}

Let us return to the integral $S_B(t)$ itself and consider assumptions ensuring for $d>2$
\begin{gather*}
	S_B(\infty)
	= \int_{0}^{\infty} \frac{ds}{\lambda(B_s)}
  	= \infty\quad \text{a.s.}
\end{gather*}

\begin{lemma}\label{infin1}
	Let $d>2$ and $\lambda:  \real^d \to (0, +\infty)$ be a measurable function, satisfying assumption \ref{i} from  Lemma~\ref{infin} and the following assumption
\begin{enumerate}\setcounter{enumi}{2}
	\item\label{iii} $\limsup_{\|x\| \to \infty} \lambda (x)<\infty$.
\end{enumerate}
Then
\begin{gather}\label{local}
   S_B(\infty)
    = \int_{0}^{\infty} \frac{ds}{\lambda(B_s)}
    = \infty
    \quad \text{a.s.}
\end{gather}
\end{lemma}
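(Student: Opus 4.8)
The plan is to exploit the transience of $d$-dimensional Brownian motion — available precisely because $d>2$ — together with the bound on $\lambda$ far from the origin supplied by assumption~\ref{iii}. Since $\limsup_{\|x\|\to\infty}\lambda(x)<\infty$, I can fix constants $R>0$ and $M\in(0,\infty)$ such that $\lambda(x)\leq M$ whenever $\|x\|\geq R$, equivalently $1/\lambda(x)\geq 1/M$ on the complement of the open ball $U_R:=\{x\in\real^d:\|x\|<R\}$. In this way the claim \eqref{local} reduces to showing that $B$ spends an infinite amount of (Lebesgue) time outside $U_R$, which is where transience enters.

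Concretely, I would proceed as follows. For $d>2$ the process $B$ is transient, hence $\|B_s\|\to\infty$ a.s.\ as $s\to\infty$; in particular the last exit time $\sigma_R:=\sup\{s\geq 0:B_s\in U_R\}$ (the defining set is nonempty, as $B_0=0\in U_R$) is finite outside a $\Pp$-null set, and $\|B_s\|\geq R$ for every $s>\sigma_R$. On this almost sure event,
\begin{equation*}
    S_B(\infty)
    = \int_0^\infty \frac{ds}{\lambda(B_s)}
    \geq \int_{\sigma_R}^\infty \frac{ds}{\lambda(B_s)}
    \geq \frac{1}{M}\int_{\sigma_R}^\infty ds
    = \infty,
\end{equation*}
since $\sigma_R<\infty$; this is exactly \eqref{local}. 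Note that assumption~\ref{i} plays no role in this particular argument — it is used elsewhere only to guarantee that $S_B(t)$ is a.s.\ finite for every fixed $t$, so that the time change $\tau_t$ is well defined.

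There is no real obstacle here: the statement is, in effect, the easy companion of Lemma~\ref{infin}, and in contrast to that lemma it uses no information about the octant-wise limits of $\lambda$, only that $\lambda$ remains bounded at infinity. The only points needing a moment's care are the (classical) fact that $d$-dimensional Brownian motion with $d\geq 3$ drifts to infinity, and the measurability of $\{\sigma_R<\infty\}$, which contains the full-measure event $\{\lim_{s\to\infty}\|B_s\|=\infty\}$; both are routine.
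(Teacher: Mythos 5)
Your proof is correct. Both you and the paper exploit the same underlying mechanism: for $d>2$ the Brownian path eventually leaves the region where $1/\lambda$ might be small, and outside that region assumption \ref{iii} gives a uniform lower bound $1/\lambda \geq 1/M$, so the time integral grows at least linearly and diverges. The only real difference is the input you invoke: the paper quotes Theorem~6 of Dvoretzky \& Erd\H{o}s to get a quantitative rate of escape, namely $\|B_t\|>t^{1/2-1/d}$ for all $t\geq t_0(\omega)$ a.s., and then waits until this lower bound exceeds the radius $\rho$ outside of which $1/\lambda(x)>a$. You instead use only the qualitative transience $\|B_s\|\to\infty$ a.s.\ (finiteness of the last exit time from a ball), which is all that is actually needed here; the escape rate is superfluous for this lemma, so your route is marginally more elementary while reaching the same linear-in-$t$ lower bound $\tfrac{1}{M}(t-\sigma_R)$. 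Your closing remarks are also accurate: assumption \ref{i} is not needed for the divergence itself, and the measurability issue you flag is harmless since $S_B(\infty)$ is measurable as a monotone limit and the divergence holds on the full-measure event $\{\lim_{s\to\infty}\|B_s\|=\infty\}$.
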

\begin{proof}
	According to Theorem~6 from the classical paper Dvoretzky \& Erd\"{o}s~\cite{dvorerd}, for any decreasing function $g:\real_{+} \to \real_{+} $ the probability of the event
\begin{gather*}
	\left\{ \|B_t\| < g(t) \sqrt{t} \text{\ for some arbitrarily large\ }  t \right\}
\end{gather*}
is $0$ or $1$ depending on whether the series $\sum_{m=1}^{\infty} g^{d-2}(2^m)$ converges or diverges. For $g(t) = t^{-1/d}$, the series $\sum_{m=1}^{\infty} (2^m)^{-1+ \frac{2}{d}}$ converges, and we have $\|B_t(\omega)\| > t^{1/2 -1/d}$ for $t \geq t_0(\omega)$ and almost all $\omega$. Furthermore, it follows from \ref{iii} that there exist $a>0$ and $\rho >0$ such that $\frac{1}{\lambda(x)} > a$ for $\|x\| \geq \rho$. Therefore, for all $t \geq s_0(\omega) := \max\left\{t_0(\omega),\; \rho^{\frac{1}{1/2-1/d}}\right\}$
\begin{equation*}
    \int_{0}^{t} \frac{ds}{\lambda(B_s(\omega))}
    \geq \int_{s_0(\omega)}^{t} \frac{ds}{\lambda(B_s(\omega))}
    \geq a(t-s_0(\omega)) \xrightarrow[t\to\infty]{} \infty \quad \textit{a.s.},
\end{equation*}
from which \eqref{local} follows.
\end{proof}

\begin{remark}
In order to illustrate Lemma~\ref{infin1}, we consider some examples. They work also in the case $d=2$, but the Kallianpur--Robbins law provides the general result, see above. To start with, assume that $\lambda$ is a bounded function. Then
\begin{equation*}
	\int_{0}^{\infty} \frac{ds}{\lambda(B_s)} = \infty \quad \text{a.s.\ for any} \quad d \geq 1. 
\end{equation*}
	
Assume that $\lambda$  is unbounded, growing at most polynomially, i.e. 
\begin{equation*}
	\limsup_{\|x\| \to \infty} \frac{\lambda(x)}{1+\|x\|^{\gamma}} = b > 0,
\end{equation*}
for some  $\gamma >0$. Obviously, there exists some $C>0$ such that $\lambda(x) \leq C (1+\|x\|^{\gamma})$ and
\begin{gather*}
	\int_{0}^{\infty} \frac{ds}{\lambda(B_s)} \geq \int_{0}^{\infty} \frac{ds}{C(1+\|B_s\|^{\gamma})}.
\end{gather*}
According to the law of the iterated logarithm for the norm of a multidimensional Wiener process,
\begin{equation*}
	\limsup_{u \to \infty} \frac{\|B_u\|}{\sqrt{u\log\log u }} = \sqrt{2}.
\end{equation*}
This shows that a.s.\
\begin{gather*}
	\int_{0}^{\infty} \frac{ds}{\lambda(B_s)}
	\geq \int_{0}^{\infty} \frac{ds}{C\left(1+\|B_s\|^{\gamma}\right)}
	\geq C'\int_{0}^{\infty} \frac{ds}{C\left(1+s^{\frac{\gamma}{2}} (\log\log s )^{\frac{\gamma}{2}}\right)},
\end{gather*}
but the latter integral diverges if $0 \leq \gamma <2$.

We remark, in passing, that in the case $d=2$ it was mentioned by Spitzer (see \cite{spitz}) that P.~L\'evy's conjecture on planar Brownian motion: ``\emph{For every $\epsilon>0$, the lower bound $\|B_t\|\geq t^{-\epsilon}$ holds for all sufficiently large $t$ with probability $1$}'', is false. Therefore in this case (as is the case in dimension $d=1$) the asymptotic behaviour of $\lambda$ does, in general, not help.
\end{remark}

Taking into account the previous discussion, we will from now on assume that
\begin{equation*}
	\int_{0}^{\infty} \frac{ds}{\lambda(B_s)} = +\infty \quad \text{a.s.}
\end{equation*}
By $\tau_t  = \inf\left\{ x>0 : \, S_B(x) > t \right\}$ we denote the generalized right-inverse of the additive functional $S_B$ defined by  \eqref{func1}. As usual, $\inf\emptyset:=\infty$; note that $\tau_0 = 0$. Since $S_B(x)$ is continuous and strictly increasing, we have
\begin{align*}
	\{\tau_t \leq u \}
	&= \left\{S_B(u)=\int_{0}^{u} \frac{ds}{\lambda(B_s)} \geq t \right\} \in \Fcal_{u}^{B}.
\end{align*}
Therefore, $\tau_t$ is a stopping time w.r.t.\ the filtration $\Fcal^B$; moreover,  $\tau_t < \infty$ a.s., and we can consider the time-changed process $B_{\tau_t}$.  We are going to apply to this process functional limit theorems for the superpositions of stochastic  processes. Let us point out that, in general, $B$ and $\tau$ are not independent, nor is $B_{\tau_t}$ a   square-integrable martingale. In view of Wald's identities, a necessary and sufficient condition for the latter is $\Ee\left[\tau_t\right]<\infty$. The following lemma gives a typical sufficient condition for $\Ee\left[\tau_t\right]<\infty$.

\begin{lemma}
	Assume that there exists a \textup{(}non-random\textup{)} positive and strictly increasing function $\phi:\real^2 \to [0,\infty)$ such that $\lim_{t\to\infty}\phi (t)=\infty$ and
	\begin{gather*}
		\int_{0}^{t} \frac{ds}{\lambda(B_s)}\geq \phi(t),\quad t\geq 0.
	\end{gather*}
	Then $\Ee\left[\tau_t\right]<\infty$ for all $t>0$. In particular, if $\lambda$ is bounded, i.e.\ it has a uniform upper bound $\lambda(x) \leq C$ for all $x \in \real^d$, then $\Ee\left[\tau_t\right]<\infty$ for all $t>0$.
\end{lemma}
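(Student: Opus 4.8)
The hypothesis is designed precisely so that the random functional $S_B$ is dominated from below by a deterministic function, which in turn forces a deterministic \emph{upper} bound on the inverse $\tau$. The plan is first to introduce the generalized right-continuous inverse of $\phi$, namely $\psi(t):=\inf\{x>0:\phi(x)>t\}$, and to record that $\psi(t)<\infty$ for every $t\geq 0$. This is the one place where both assumptions on $\phi$ are used: since $\phi$ is strictly increasing with $\phi(x)\to\infty$ as $x\to\infty$, the set $\{x>0:\phi(x)>t\}$ is a non-empty half-line and hence has a finite infimum. It is also essentially the only point that needs checking, so I do not expect any real obstacle.

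Next I would prove the pathwise inequality $\tau_t\leq\psi(t)$ almost surely. Fix any $x>0$ with $\phi(x)>t$; then the assumed comparison gives $S_B(x)=\int_0^x\lambda(B_s)^{-1}\,ds\geq\phi(x)>t$, so $x$ lies in $\{u>0:S_B(u)>t\}$ and therefore $\tau_t=\inf\{u>0:S_B(u)>t\}\leq x$. Taking the infimum over all such $x$ yields $\tau_t\leq\psi(t)$. Since $\psi(t)$ is a finite constant not depending on $\omega$, taking expectations gives $\Ee[\tau_t]\leq\psi(t)<\infty$, which is the assertion.

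For the ``in particular'' statement, boundedness $\lambda(x)\leq C$ for all $x\in\real^d$ gives $S_B(t)\geq\int_0^t C^{-1}\,ds=t/C$ for every $t\geq 0$, so the hypothesis holds with the strictly increasing, unbounded function $\phi(t)=t/C$; its inverse is $\psi(t)=Ct$, and the bound above reads $\Ee[\tau_t]\leq Ct<\infty$. Equivalently, one may argue directly: if $x>Ct$ then $S_B(x)\geq x/C>t$, hence $\tau_t\leq Ct$ almost surely.
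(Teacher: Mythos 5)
Your proof is correct and rests on the same key observation as the paper's: the comparison $S_B(x)\geq\phi(x)$ forces the deterministic bound $\tau_t\leq\phi^{-1}(t)$ (your $\psi(t)$), from which finiteness of the expectation is immediate. The paper phrases this through the tail formula $\Ee[\tau_t]=\int_0^\infty\Pp(\tau_t\geq x)\,dx\leq\phi^{-1}(t)$ rather than your pathwise inequality, but this is only a presentational difference.
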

\begin{proof}
The claim follows easily from the following equalities
\begin{align*}
    \Ee\left[\tau_t\right]
    = \int_{0}^{\infty} \Pp \left(\tau_t \geq x \right) dx
    &= \int_{0}^{\infty} \Pp \left(\int_{0}^{x} \frac{ds}{\lambda(B_s)}  \leq t \right) dx \\
    &= \int_{0}^{\phi^{-1}(t)} \Pp \left(\int_{0}^{x} \frac{ds}{\lambda(B_s)}   \leq t\right) dx
    \leq \phi^{-1}(t).
\qedhere
\end{align*}
\end{proof}

\section{Functional limit theorems for a time-changed multidimensional Wiener process}\label{sec-3}
Since the map $t\mapsto\tau_t$ is continuous, we conclude that the processes $t\mapsto B_{\tau_t}$ and $t\mapsto B_{\tau_{nt}},\: n\geq 1$ are continuous, but we do not have much further information. This means, in particular, that we have to use methods which do not rely on ($L^2$\nobreakdash-)\/martingale methods or independence of the time change and the underlying Brownian motion. Our aim is to identify the normalizing factor $\phi(n)\to\infty$ as $n\to\infty$ such that, for a suitable limiting process $\zeta=\{\zeta_t, t\geq 0\}$,
\begin{gather*}
    \left\{\frac{B_{\tau_{nt}}}{\phi(n)},\, t\in[0,T]\right\} \xRightarrow[n\to\infty]{\phantom{\mathrm{fdd}}} \big\{\zeta_t, t\in[0,T]\big\}
\end{gather*}
in the sense of weak convergence of the measures corresponding to the stochastic processes on any interval $[0,T]$, $T>0$.
As usual, the symbol ``$\xRightarrow[]{\phantom{\mathrm{fdd}}}$'' denotes weak convergence of probability measures.

\subsection{A Functional limit theorem for the superposition of stochastic processes}\label{sub1}

We will use the following modification of a theorem on weak convergence of superposition of stochastic processes, see Theorem 3.2.1 and its generalization in Section~3.2 of the monograph by Silvestrov~\cite{Silvestrov}. By ``$\xrightarrow[]{\mathrm{fdd}}$'' we denote the weak convergence of the finite-dimensional distributions (fdd), whereas ``$\xRightarrow[]{\phantom{\mathrm{fdd}}}$'' denotes weak convergence of probability measures, see above.

\begin{theorem}\label{th-cond-1}
    Let $\left\{(Z_{n}(t), \nu_{n}(t)), t \geq 0\right\}_{n\geq 0}$ be a sequence of stochastic processes with continuous paths such that for all $n\geq 1$ the processes $\nu_n(t)\geq 0$ are positive and increasing in $t$,   and $Z_n(t)$ are $\real^d$-valued for some $d\geq 1$.  Assume that the following conditions hold for all $T>0$.
    \begin{enumerate}\setcounter{enumi}{3}
    \item\label{iv}
        There is some dense subset $U\subset [0,T]$ such that $0\in U$ and
        \begin{gather*}
        	\big\{ (\nu_{n}(t), Z_{n}(t)),\, t\in U\big\}
        	\xrightarrow[n\to\infty]{\mathrm{fdd}} \big\{ (\nu_{0}(t) , Z_ {0}(t)), \,  t  \in U\big\}.
        \end{gather*}
   \item\label{v}
        For any $\epsilon>0$
        \begin{gather*}
        	\lim_{h \downarrow 0} \limsup_{n\to \infty} \Pp\left(\sup_{0 \leq t_1 \leq t_2 \leq (t_1+h) \wedge T} \|Z_n (t_2) - Z_n (t_1)\|> \epsilon \right) = 0.
        \end{gather*}
    \end{enumerate}
    Then, on every interval $[0,T]$, the superposition $\zeta_n (\cdot) := Z_n (\nu_n (\cdot))$ converges weakly:
    \begin{gather*}
        \zeta_n (\cdot) \xRightarrow[n\to\infty]{} \zeta_0 (\cdot) = Z_0 (\nu_0 (\cdot)).
    \end{gather*}
 \end{theorem}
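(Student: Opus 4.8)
The plan is to prove weak convergence in $C([0,T];\real^d)$ via the standard two-ingredient recipe: (a) convergence of finite-dimensional distributions of the superposition $\zeta_n = Z_n\circ\nu_n$, and (b) tightness of the laws of $\zeta_n$ in $C([0,T];\real^d)$, which for continuous processes means a modulus-of-continuity control. Condition \ref{v} is exactly the tightness half (it is the Arzel\`a--Ascoli-type oscillation condition for $Z_n$, and we must transport it through the time change $\nu_n$), while condition \ref{iv} provides the raw material for the fdd half, but only along the dense set $U$ and jointly with $\nu_n$. So the real work is to combine \ref{iv} and \ref{v}.

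\textbf{Step 1: fdd convergence of $\zeta_n$.} Fix $0\le s_1<\dots<s_k\le T$. I want to show $(\zeta_n(s_1),\dots,\zeta_n(s_k))\Rightarrow(\zeta_0(s_1),\dots,\zeta_0(s_k))$. Pick, using density of $U$, times $u^{(j)}\in U$ close to $s_j$; by \ref{iv}, $\big((\nu_n(u^{(j)}),Z_n(u^{(j)}))\big)_{j}$ converges in law to the corresponding vector for $n=0$. By Skorokhod's representation theorem realise all these on one probability space with a.s.\ convergence. Now $\zeta_n(s_j)=Z_n(\nu_n(s_j))$; the trouble is that $\nu_n(s_j)$ is random and $s_j\notin U$ in general, so I cannot directly read off $Z_n(\nu_n(s_j))$ from the fdd's at points of $U$. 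The device is: (i) monotonicity of $\nu_n$ lets me sandwich $\nu_n(s_j)$ between $\nu_n$ evaluated at points of $U$ just below and just above $s_j$, hence control $\nu_n(s_j)$; (ii) then $\|Z_n(\nu_n(s_j))-Z_n(\nu_n(u^{(j)}))\|$ is bounded by the oscillation of $Z_n$ over an interval of length $|\nu_n(s_j)-\nu_n(u^{(j)})|$, which is small with high probability by \ref{v} once $u^{(j)}$ is chosen close enough to $s_j$ and one notes that $\nu_n$ does not blow up (its values are tight, again by \ref{iv}). Letting the $u^{(j)}$ approach $s_j$ and using that $\nu_0$, $Z_0$ are a.s.\ continuous, the limit $Z_0(\nu_0(u^{(j)}))\to Z_0(\nu_0(s_j))=\zeta_0(s_j)$, giving fdd convergence.

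\textbf{Step 2: tightness of $\zeta_n$.} I must upgrade the oscillation bound \ref{v} for $Z_n$ to one for $\zeta_n$. For $0\le t_1\le t_2\le (t_1+h)\wedge T$ we have $\nu_n(t_1)\le\nu_n(t_2)$ and $\|\zeta_n(t_2)-\zeta_n(t_1)\|=\|Z_n(\nu_n(t_2))-Z_n(\nu_n(t_1))\|$. If the time change had a uniform modulus this would be immediate, but it need not. Instead I use a two-scale argument: fix a large constant $M$; on the event $\{\nu_n(T)\le M\}$ the images $\nu_n(t_1),\nu_n(t_2)$ lie in $[0,M]$, and I partition $[0,T]$ into blocks on which $\nu_n$ increases by at most a prescribed amount $\delta$ (this is possible pathwise by continuity, though the partition is random), so that the $\zeta_n$-oscillation over a short time interval is dominated by the $Z_n$-oscillation over a $\delta$-interval inside $[0,M]$ plus at most one "jump" across a block boundary — and all of these are controlled by \ref{v}. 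The probability of $\{\nu_n(T)>M\}$ is made small uniformly in $n$ by tightness of $\nu_n(T)$, which follows from \ref{iv} (fdd convergence at the point $T\in U$, or at a point of $U$ arbitrarily close to $T$ combined with monotonicity). This yields
\[
    \lim_{h\downarrow0}\limsup_{n\to\infty}\Pp\Big(\sup_{0\le t_1\le t_2\le(t_1+h)\wedge T}\|\zeta_n(t_2)-\zeta_n(t_1)\|>\epsilon\Big)=0,
\]
which, together with tightness of $\zeta_n(0)=Z_n(0)$, gives tightness of $\{\zeta_n\}$ in $C([0,T];\real^d)$.

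\textbf{Conclusion and the main obstacle.} Tightness plus fdd convergence along a dense set (in particular, Step 1) identify the limit and give $\zeta_n\Rightarrow\zeta_0=Z_0\circ\nu_0$ on $[0,T]$; since $T>0$ is arbitrary this is the assertion. I expect the main obstacle to be Step 2 — transporting the oscillation estimate through a \emph{random, possibly wildly varying} time change $\nu_n$: one cannot simply compose moduli of continuity, and the partitioning-into-blocks argument must be set up carefully so that the number of "boundary jumps" contributing to a short-interval oscillation stays bounded (ideally by one), uniformly in $n$, while simultaneously keeping $\nu_n(T)$ inside a compact set with high probability. A secondary technical point is the handling of times $s_j\notin U$ in Step 1, which is where the monotonicity of $\nu_n$ is essential.
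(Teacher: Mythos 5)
The paper does not prove Theorem~\ref{th-cond-1} at all: it is quoted as a modification of Theorem~3.2.1 (and its generalization in Section~3.2) of Silvestrov's monograph \cite{Silvestrov}, so there is no in-paper argument to compare yours against. Your two-part strategy --- finite-dimensional convergence of $\zeta_n=Z_n\circ\nu_n$ plus an oscillation bound for $\zeta_n$ in $C([0,T];\real^d)$ --- is the standard and correct framework for such results, but each part is missing its central ingredient.

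In Step 1, the reduction from $s_j\notin U$ to $u^{(j)}\in U$ is sound, but you then assert $Z_n(\nu_n(u^{(j)}))\to Z_0(\nu_0(u^{(j)}))$ with no argument, and this is precisely the hard point of the theorem: condition \ref{iv} gives joint convergence of $(\nu_n(u),Z_n(u))$ at \emph{deterministic} times only, which by itself says nothing about $Z_n$ evaluated at the \emph{random} time $\nu_n(u)$. Without \ref{v} the implication is simply false: take $\nu_n(u)=V\sim\mathrm{Unif}[0,1]$ and $Z_n$ a tent function of height $1$ and width $1/n$ centred at $V$; then all finite-dimensional distributions of $(\nu_n,Z_n)$ converge to those of $(V,0)$, yet $Z_n(\nu_n(u))\equiv 1$. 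The missing device is a discretization of the \emph{range} of $\nu_n$: choose a fine deterministic grid $\{r_m\}\subset U$ avoiding the (at most countably many) atoms of the law of $\nu_0(u)$, write $Z_n(\nu_n(u))=\sum_m Z_n(r_m)\,\I_{\{\nu_n(u)\in[r_m,r_{m+1})\}}+\text{error}$, control the error by \ref{v}, and pass to the limit using \ref{iv} at the times $u,r_1,\dots,r_M$ jointly; equivalently, first upgrade \ref{iv}+\ref{v} to weak convergence $Z_n\Rightarrow Z_0$ in $C([0,M];\real^d)$ and apply Skorokhod at the process level before composing. Your closing remark treats the case $s_j\notin U$ as the only obstacle in Step 1; in fact it is the peripheral one.

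In Step 2 the random partition into blocks on which $\nu_n$ increases by $\delta$ does not close the argument, as you partly suspect: the blocks can be arbitrarily short in $t$, so a $t$-interval of length $h$ may straddle many of them --- equivalently, $\nu_n$ may increase by a macroscopic amount inside a single $h$-interval --- and then the oscillation of $\zeta_n$ there is an oscillation of $Z_n$ over a long interval, which \ref{v} does not make small; tightness of $\nu_n(T)$ alone cannot rule out such near-jumps. What is needed, and what your proposal never extracts from the hypotheses, is a uniform-in-$n$ control of the modulus of continuity of $\nu_n$ itself: for a deterministic grid $\{u_j\}\subset U$ of mesh at most $h$, monotonicity gives $\sup_{|t-s|\le h}|\nu_n(t)-\nu_n(s)|\le\max_j\bigl(\nu_n(u_{j+1})-\nu_n(u_{j-1})\bigr)$, which by \ref{iv} converges in law to $\max_j\bigl(\nu_0(u_{j+1})-\nu_0(u_{j-1})\bigr)$, and this is small for small $h$ by the a.s.\ continuity of $\nu_0$. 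The same estimate is also what justifies your claim in Step 1 that $|\nu_n(s_j)-\nu_n(u^{(j)})|$ is small with high probability (tightness of the \emph{values} of $\nu_n$, which is all you invoke, is not enough). With these two ingredients supplied, your outline becomes a complete proof along essentially Silvestrov's lines.
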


\subsection{A functional limit theorem if the intensity $\lambda$ is separated from zero}\label{sub2}

We are going to apply Theorem~\ref{th-cond-1} with $\phi(n) := \sqrt{n}$, $Z_n(t) := \frac{1}{\sqrt{n}}B_{nt}$ and $\nu_n(t) :=\frac{1}{{n}}\tau_{nt}$. In order to demonstrate the essence of our method, we assume that the function $\lambda(s)$ is uniformly bounded away from zero and admits limits in all octants $\Delta_\alpha$.

\begin{theorem}\label{separated}
Let $\lambda:  \real^d \to (0, +\infty)$ be a measurable function, satisfying the assumptions \ref{i} and \ref{ii} from
Lemma~\ref{infin}. If, additionally, $\lambda (x) \geq c$ for some $c>0$ and all $x \in \real^d $, then on any interval $[0, T]$
\begin{equation*}
	\left\{\frac{B_{\tau_{nt}}}{\sqrt{n}}, t \in [0, T] \right\}
	\xRightarrow[n\to\infty]{} \left\{ W(\nu^{-1} (t)), t \in [0, T] \right\},
\end{equation*}
where $W$ is a $d$-dimensional Wiener process and $\nu(t) = \int_0^t \upsilon (W_s)\,ds$,
\begin{gather*}
	\upsilon(x) = \sum_{\alpha\in\{0,1\}^d} \frac{1}{\lambda_\alpha} \I_{\Delta_\alpha}(x).
\end{gather*}
\end{theorem}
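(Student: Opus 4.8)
The plan is to verify the two hypotheses \ref{iv} and \ref{v} of Theorem~\ref{th-cond-1} with the choices $Z_n(t) = n^{-1/2}B_{nt}$ and $\nu_n(t) = n^{-1}\tau_{nt}$, and then to identify the limiting superposition. By Brownian scaling, $Z_n(t) = n^{-1/2}B_{nt}$ has exactly the law of a $d$-dimensional Wiener process $W$ for every fixed $n$, so the tightness condition \ref{v} is automatic (it is just the modulus-of-continuity estimate for a single Brownian motion, which does not depend on $n$). The substance of the argument is therefore condition \ref{iv}: I must show that the pair $(\nu_n(t), Z_n(t))$ converges in finite-dimensional distributions to $(\nu(t), W_t)$, where $\nu(t) = \int_0^t \upsilon(W_s)\,ds$. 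Since $\tau$ is the inverse of the continuous strictly increasing additive functional $S_B$, and $\nu_n = n^{-1}\tau_{nt}$ is the inverse of $t\mapsto n^{-1}S_B(nt)$, the natural route is first to prove that $n^{-1}S_B(n\,\cdot)$ converges, jointly with $Z_n$, to $t\mapsto \int_0^t \upsilon(W_s)^{-1}\,ds =: \sigma(t)$ — wait, one has to be careful with which functional is which; $S_B(t) = \int_0^t \lambda(B_s)^{-1}\,ds$, so after scaling $n^{-1}S_B(nt) = \int_0^t \lambda(B_{ns})^{-1}\,ds = \int_0^t \lambda(\sqrt n\,Z_n(s))^{-1}\,ds$.

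The key limit to establish is thus
\begin{gather*}
    \left(Z_n(\cdot),\; \int_0^\cdot \frac{ds}{\lambda(\sqrt n\,Z_n(s))}\right)
    \xrightarrow[n\to\infty]{\mathrm{fdd}}
    \left(W_\cdot,\; \int_0^\cdot \upsilon(W_s)\,ds\right),
\end{gather*}
with $\upsilon(x) = \sum_\alpha \lambda_\alpha^{-1}\I_{\Delta_\alpha}(x)$. Heuristically this holds because, as $n\to\infty$, the point $\sqrt n\,Z_n(s)$ is pushed to infinity inside whichever octant $\Delta_\alpha$ contains $Z_n(s)$ (equivalently $W_s$), and there assumption~\ref{ii} forces $\lambda(\sqrt n\,Z_n(s))^{-1}\to\lambda_\alpha^{-1}$. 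To make this rigorous I would fix $t>0$, realize $Z_n$ on one probability space as literally equal in law to $W$ (so that one may as well write $\int_0^t \lambda(\sqrt n\,W_s)^{-1}\,ds$), and split the integral over the random Lebesgue-null set of times $s$ for which $W_s$ lies on a bounding hyperplane $\{x_i=0\}$ (which contributes nothing) versus times with $W_s\in\Delta_{\alpha(s)}$ for a well-defined $\alpha(s)$. For the latter, $\min_i|\sqrt n\,(W_s)_i|\to\infty$ pointwise, so the integrand converges pointwise a.e.\ to $\upsilon(W_s)$; the uniform lower bound $\lambda\geq c$ gives the domination $\lambda(\sqrt n\,W_s)^{-1}\leq 1/c$, and dominated convergence yields $\int_0^t \lambda(\sqrt n\,W_s)^{-1}\,ds \to \int_0^t \upsilon(W_s)\,ds$ almost surely, hence in distribution, jointly with $W$ itself (the joint convergence is free since $W$ is held fixed). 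Evaluating at finitely many time points $t_1,\dots,t_k$ gives condition~\ref{iv} on the dense set $U=[0,T]$.

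It remains to pass from the convergence of the additive functionals to the convergence of their inverses and to identify the limit process. Because the limit $t\mapsto\sigma(t):=\int_0^t \upsilon(W_s)^{-1}\,ds$ — here using $\upsilon$ with $\lambda_\alpha$ in the numerator, i.e.\ writing $n^{-1}S_B(nt)\to\int_0^t (1/\lambda)(\ldots)$, whose limit is $\int_0^t \upsilon(W_s)\,ds$ with the convention $\upsilon(x)=\sum_\alpha\lambda_\alpha^{-1}\I_{\Delta_\alpha}(x)$ as stated — is continuous and strictly increasing with $\sigma(\infty)=\infty$ a.s. (the latter is guaranteed by Lemma~\ref{infin1} together with boundedness of $1/\upsilon$ away from $0$, or directly from $\lambda\geq c$), its right-inverse is itself continuous and strictly increasing, and the inversion map is continuous at such trajectories in the topology of uniform convergence on compacts. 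Hence $\nu_n(t)=n^{-1}\tau_{nt}$ converges jointly with $Z_n$ to $(\nu(t),W_t)$ where $\nu$ is the inverse of $\sigma$. This supplies exactly the hypotheses of Theorem~\ref{th-cond-1}, whose conclusion then reads $Z_n(\nu_n(\cdot)) = n^{-1/2}B_{\tau_{n\cdot}}\Rightarrow W(\nu(\cdot))$, and writing $\nu=\nu^{-1}$ in the paper's notation (i.e.\ the inverse of $t\mapsto\int_0^t\upsilon(W_s)\,ds$) completes the proof. I expect the main obstacle to be the rigorous justification of the almost-sure pointwise convergence of $\lambda(\sqrt n\,W_s)^{-1}$ inside the integral uniformly enough to invoke dominated convergence — in particular, carefully handling the event where $W_s$ approaches the octant boundaries and confirming that the exceptional set of times has Lebesgue measure zero almost surely (which follows because planar and higher Brownian motion spends zero time on any fixed hyperplane), and then threading this through the continuity of the inversion map.
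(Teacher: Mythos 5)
Your proposal is correct and follows essentially the same route as the paper: Brownian scaling renders condition \ref{v} trivial, and condition \ref{iv} is obtained by rewriting $\{\tau_{nt}/n\le x\}$ in terms of the scaled additive functional $\int_0^x\lambda(\sqrt{n}\,W_s)^{-1}\,ds$, decomposing over the octants, and applying dominated convergence (with the bound $1/\lambda\le 1/c$ and the fact that the boundary-hyperplane times are Lebesgue-null a.s.) to identify the limit $\nu(x)$. The only cosmetic difference is that the paper passes to the inverse via the event identity $\{\nu(x)\ge t\}=\{\nu^{-1}(t)\le x\}$ rather than via continuity of the inversion map on strictly increasing continuous paths; both are valid.
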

\begin{remark}
	The Process $W(\nu^{-1} (\cdot))$ can be considered as a multi-dimensional analogue of a skew Brownian motion.
\end{remark}

\begin{proof}
Let us check the conditions \ref{iv} and \ref{v} of Theorem~\ref{th-cond-1}. Under our assumptions, we have for any $n \geq 1$
\begin{gather*}
    Z_n (t)
    =\frac{B_{nt}}{\sqrt{n}}
    = W_n (t),
\end{gather*}
for some Wiener process $W_n$.

\medskip\noindent
Condition \ref{iv}: Note that the processes $Z_n$ and $\frac 1n\tau_{n\tinybullet}$ (``$\tinybullet$'' stands for the running argument $t$) are continuous. Furthermore, for any $0 \leq t_1 < t_2 <\ldots< t_k \leq T$, $k \geq 1$, and any $x_i, z_i \in \real$, $1 \leq i \leq k$, we have
\begin{align*}
    \Pp &\left\{\frac{\tau_{n t_i}}{n} \leq x_i, \; W_n (t_i) \leq z_i, \; \forall 1\leq i \leq k  \right\}\\
    &=\Pp \left\{\int_0^{nx_i} \frac{ds}{\lambda(B_s)} \geq nt_i, \; W_n (t_i) \leq z_i, \; \forall 1\leq i \leq k  \right\}\\
    &=\Pp \left\{\int_0^{x_i} \frac{ds}{\lambda(B_{ns})} \geq t_i, \; W_n (t_i) \leq z_i, \; \forall 1\leq i \leq k  \right\}\\
    &= \Pp \left\{\int_0^{x_i} \frac{ds}{\lambda\left(\sqrt{n} W_n (s)\right)} \geq t_i, \; W_n (t_i) \leq z_i, \; \forall 1\leq i \leq k  \right\}\\
    &= \Pp \left\{\int_0^{x_i} \frac{ds}{\lambda\left(\sqrt{n} W (s)\right)} \geq t_i, \; W (t_i) \leq z_i, \; \forall 1\leq i \leq k  \right\},
\intertext{where $W$ is a multi-dimensional  Wiener process. Furthermore,}
    \Pp&\left\{\int_0^{x_i} \frac{ds}{\lambda\left(\sqrt{n} W (s)\right)} \geq t_i, \; W (t_i) \leq z_i, \; \forall 1\leq i \leq k  \right\}\\&= \Pp \left\{\sum_{\alpha\in\{0,1\}^d} \int_0^{x_i} \frac{ \I_{\{W(s) \in \Delta_\alpha\}} }{\lambda\left(\sqrt{n} W(s)\right)}\, ds \geq t_i, \, W(t_{i}) \leq z_i, \, \forall 1 \leq i \leq k \right\}=: \Pp_n(k).
\end{align*}
Note that by Lebesgue's dominated convergence theorem
\begin{equation}\label{withp1}
	  \sum_{\alpha\in\{0,1\}^d}   \int_0^{x_i} \frac{ \I_{\{W_s \in \Delta_\alpha \}} }{\lambda\left(\sqrt{n} W_s\right)}\, ds
	\xrightarrow[n\to\infty]{}
	 \sum_{\alpha\in\{0,1\}^d}  \int_0^{x_i} \I_{\{W_s \in \Delta_\alpha \}} \,ds \,\frac{1}{\lambda_\alpha}
	  =: \nu(x_i). 
\end{equation}
Therefore, we have a weak convergence of probabilities
\begin{equation}\label{weakconv}
	\{\Pp_n(k)\}\xrightarrow[n\to\infty]{} \{\Pp \left\{    \nu(x_i) \geq t_i, \, W(t_{i}) \leq z_i, \, \forall 1 \leq i \leq k \right\} \},
\end{equation}
and in turn it means the weak convergence of finite-dimensional distributions of the respective processes. 
The trajectories of $\nu$ are strictly increasing in $x>0$ with probability $1$. Thus, the inverse process $\nu^{-1} (t)$ exists, and is a continuous process. In particular, we see that both limit processes are continuous. Finally,
\begin{align*}
    \Pp&\left\{ \frac{\tau_{n t_i}}{n} \leq x_i, \; W_n (t_i) \leq z_i, \;\forall 1\leq i \leq k  \right\}\\
    &\xrightarrow[n\to\infty]{} \Pp \big\{ \nu^{-1}(t_i) \leq x_i , \; W (t_i) \leq z_i, \;\forall 1\leq i \leq k  \big\},
\end{align*}
and the condition \ref{iv} is established.

\medskip\noindent
Condition \ref{v}: Obviously, we have for every $\epsilon>0$
\begin{align*}
    \lim_{h \downarrow 0} \limsup\limits_{n\to \infty} \Pp &\left\{ \sup_{0 \leq t_1 \leq t_2 \leq (t_1+h) \wedge T} \|Z_n (t_2) - Z_n (t_1)\|> \epsilon\right\} \\
    &=\lim_{h \downarrow 0} \Pp \left\{ \sup_{0 \leq t_1 \leq t_2 \leq (t_1+h)  \wedge T} \|W(t_2) - W (t_1)\|> \epsilon \right\}=0.
\end{align*}
 This proves condition \ref{v}, and an application of Theorem~\ref{th-cond-1} finishes the proof.
\end{proof}

\subsection{Functional limit theorems for the case of converging intensity $\lambda$}\label{sub3}

Our next aim is to remove the condition that the intensity $\lambda$ is separated from zero. From now on, we assume that $\lambda$ is  (Lebesgue a.e.) strictly positive with strictly positive limits in all octants and that $1/\lambda$  is locally integrable. There are two different approaches to the functional limit theorem for the time-changed multi-dimensional Wiener process, depending on the differing asymptotic behaviour of $\lambda$. The first approach  is considered in the next theorem, and it describes the situation, where the   limits at infinity of   $\lambda$ are ``radial''   (in each octant). This is expressed by the following assumption: 

\begin{enumerate}\setcounter{enumi}{5}
\item\label{vi} There exist $2^d$ numbers $\left\{ \lambda_\alpha,\: \alpha\in\{0,1\}^d \right\}$ such that
\begin{equation*}
	\lim_{\substack{\|x\| \to \infty\\x \in \Delta_\alpha}} \lambda(x) = \lambda_\alpha   > 0, 
	\quad\alpha\in\{0,1\}^d.
\end{equation*}
\end{enumerate}
Clearly, the assumption \ref{vi} implies assumption \ref{ii} from Lemma~\ref{infin}. In what follows, we use the standard notation $\cball(0,R)=\left\{x\in \real^d:\: \|x\|\leq R\right\}$;  $C$ denotes a constant whose value is not important and may change from line to line in a calculation.
\begin{theorem}\label{theor1-1}
	Let $\lambda: \real^d \to (0, \infty)$ be a measurable function, satisfying assumption \ref{i} from Lemma~\ref{infin} and assumption \ref{vi}. Additionally, we require
\begin{enumerate}\setcounter{enumi}{6}
	\item\label{vii}
	If $d=2$, then there exists some $\gamma\in(0,1/2)$ that for any $R>0$
	\begin{gather*}
		\int_{\cball(0, R)} \lambda^{-1} (x) \| x\|^{-2\gamma} \,dx  < \infty.
	\end{gather*}
	If  $d>2$, then for any $R>0$
	\begin{gather*}
		\int_{\cball(0, R)} \lambda^{-1} (x) \| x\|^{2-d} dx  < \infty.
	\end{gather*}
\end{enumerate}
Then on any interval $[0, T]$
\begin{equation*}
	\left\{ \frac{B_{\tau_{nt}}}{\sqrt{n}},\: t \in [0, T] \right\}
	\xRightarrow[n\to\infty]{} \left\{ W(\nu^{-1} (t)),\: t \in [0, T] \right\}.
\end{equation*}
\end{theorem}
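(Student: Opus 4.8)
The plan is to run the same machinery as in the proof of Theorem~\ref{separated}: apply Theorem~\ref{th-cond-1} with $\phi(n)=\sqrt n$, $Z_n(t)=B_{nt}/\sqrt n=:W_n(t)$ (a Wiener process for each fixed $n$) and $\nu_n(t)=\tau_{nt}/n$, so that the superposition $Z_n(\nu_n(t))$ is exactly $B_{\tau_{nt}}/\sqrt n$. Condition~\ref{v} of Theorem~\ref{th-cond-1} is then verified verbatim as in Theorem~\ref{separated}, since the modulus of continuity of $Z_n=W_n$ is that of a Wiener process and does not depend on $n$. All the new work goes into condition~\ref{iv}, and this is where the assumptions~\ref{vi} and~\ref{vii} enter.

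By Brownian scaling, exactly as in the proof of Theorem~\ref{separated}, the joint law of $\bigl(\tau_{nt_i}/n,\,W_n(t_i)\bigr)_{1\le i\le k}$ coincides with that of $\bigl(A_n^{-1}(t_i),\,W(t_i)\bigr)_{1\le i\le k}$, where $W$ is a fixed $d$-dimensional Wiener process, $A_n(x):=\int_0^x \lambda(\sqrt n\, W_s)^{-1}\,ds$, and $A_n^{-1}$ is its right inverse. The core of the proof is the claim that, for each fixed $x>0$,
\[
    A_n(x)\xrightarrow[n\to\infty]{\Pp}\nu(x):=\sum_{\alpha\in\{0,1\}^d}\frac{1}{\lambda_\alpha}\int_0^x \I_{\{W_s\in\Delta_\alpha\}}\,ds .
\]
Granting this, since the $A_n$ are continuous and strictly increasing and $\nu$ is continuous and strictly increasing (indeed $m\,x\le\nu(x)\le M\,x$ with $m=\min_\alpha \lambda_\alpha^{-1}>0$ and $M=\max_\alpha\lambda_\alpha^{-1}<\infty$), a standard Pólya-type argument upgrades this pointwise-in-probability convergence to uniform-on-compacts convergence in probability, whence also $A_n^{-1}\to\nu^{-1}$ uniformly on compacts in probability. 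Because the second coordinate $W(t_i)$ does not depend on $n$, this yields $\bigl(\tau_{nt_i}/n,\,W_n(t_i)\bigr)_i\xrightarrow[n\to\infty]{d}\bigl(\nu^{-1}(t_i),\,W(t_i)\bigr)_i$, i.e.\ condition~\ref{iv} holds with $Z_0=W$ and $\nu_0=\nu^{-1}$; both limit processes have continuous paths, so Theorem~\ref{th-cond-1} applies and gives $B_{\tau_{n\tinybullet}}/\sqrt n=Z_n(\nu_n(\tinybullet))\Rightarrow W(\nu^{-1}(\tinybullet))$ on $[0,T]$.

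It remains to prove the claim, and this is the real content. Fix $\epsilon>0$ and, using assumption~\ref{vi}, choose $R=R_\epsilon$ so that $|\lambda(y)-\lambda_\alpha|<\epsilon$ whenever $y\in\Delta_\alpha$ and $\|y\|>R$. Decompose $A_n(x)=A_n^{>R}(x)+A_n^{\le R}(x)$ by splitting the integral according to whether $\|\sqrt n\, W_s\|>R$ or $\le R$. On $\{\|\sqrt n\, W_s\|>R\}$ the integrand is bounded by the constant $(\min_\alpha\lambda_\alpha-\epsilon)^{-1}$ and lies within $O(\epsilon)$ of $\sum_\alpha \lambda_\alpha^{-1}\I_{\{W_s\in\Delta_\alpha\}}$; since $\{\|\sqrt n\, W_s\|>R\}\uparrow\{W_s\neq 0\}$ while the set of times $s$ with $W_s$ on a coordinate hyperplane has Lebesgue measure $0$ a.s., dominated convergence gives $\limsup_n |A_n^{>R}(x)-\nu(x)|=O(\epsilon\,x)$ a.s. For the near part we estimate the expectation using the Gaussian density of $W_s$ together with the substitutions $y=\sqrt n\, w$ and then $z=\|y\|^2/(2ns)$ — the same change of variables as in the proof of Lemma~\ref{infin} — to obtain
\[
    \Ee\bigl[A_n^{\le R}(x)\bigr]=\frac{1}{2\pi^{d/2}n}\int_{\cball(0,R)}\frac{1}{\lambda(y)\,\|y\|^{d-2}}\int_{\|y\|^2/(2nx)}^{\infty}z^{d/2-2}e^{-z}\,dz\,dy .
\]
If $d>2$, the inner integral is $\le\Gamma(d/2-1)$, and assumption~\ref{vii} makes $\int_{\cball(0,R)}\lambda(y)^{-1}\|y\|^{2-d}\,dy$ finite, so $\Ee[A_n^{\le R}(x)]=O(1/n)$. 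If $d=2$, the inner integral equals $\int_{\|y\|^2/(2nx)}^{\infty}z^{-1}e^{-z}\,dz\le C\bigl(\log n+1+(\log\|y\|^{-1})_+\bigr)$ for $y\in\cball(0,R)$ and $n$ large; bounding $(\log\|y\|^{-1})_+\le C_\gamma\|y\|^{-2\gamma}$ near the origin and using assumptions~\ref{i} and~\ref{vii} gives $\Ee[A_n^{\le R}(x)]=O((\log n)/n)$. In both cases $\Ee[A_n^{\le R}(x)]\to 0$, so $A_n^{\le R}(x)\to 0$ in probability by Markov's inequality; combining the two parts and letting $\epsilon\downarrow 0$ proves $A_n(x)\xrightarrow{\Pp}\nu(x)$.

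The main obstacle is precisely this near-origin estimate. Unlike in Theorem~\ref{separated}, the integrand $\lambda(\sqrt n\, W_s)^{-1}$ is now unbounded, and one must quantify how little time the rescaled path $\sqrt n\, W$ spends in a fixed ball $\cball(0,R)$ against the local singularity of $1/\lambda$ there. The dimension $d=2$ is the delicate case: the Gaussian-tail computation produces a factor $\log n$ that is only just defeated by the $1/n$ in front, and its $y$-dependent remainder $(\log\|y\|^{-1})_+$ is precisely what forces the refined weight $\|y\|^{-2\gamma}$ in assumption~\ref{vii}; for $d>2$ the weight $\|y\|^{2-d}$ in~\ref{vii} is exactly the one generated by the change of variables. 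A lesser point that still needs care is the order of quantifiers: $R=R_\epsilon$ depends on the desired accuracy, so the ``far'' and ``near'' estimates must be combined in the order ``fix the target $\delta$, then choose $\epsilon$, then $R$, and only then let $n\to\infty$''.
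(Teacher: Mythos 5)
Your proposal is correct and follows essentially the same route as the paper: reduce condition \ref{iv} to the convergence in probability $\int_0^x \lambda(\sqrt n\,W_s)^{-1}ds \to \nu(x)$, split at a radius $R=R_\epsilon$ supplied by \ref{vi}, and control the near-origin part in expectation via the Gaussian occupation density and the change of variables $z=\|y\|^2/(2ns)$, which is exactly where \ref{vii} enters. The only (harmless) deviations are technical: you handle the far part pathwise by dominated convergence where the paper estimates it in $L^1$, and for $d=2$ you bound the incomplete gamma integral by $C(\log n + (\log \|y\|^{-1})_+)$ with $(\log\|y\|^{-1})_+ \leq C_\gamma\|y\|^{-2\gamma}$, whereas the paper inserts the factor $(2nt/\|x\|^2)^\gamma$ to get the rate $n^{\gamma-1}$ -- both exploit assumption \ref{vii} in the same way.
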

\begin{remark}
	If $\lambda$ is locally integrable (assumption \ref{i}), then it is sufficient to assume \ref{vii} for any sufficiently small ball $\cball(0,\delta)$. In particular, it is enough to assume the integrability at the origin of $\lambda^{-1} (\cdot) \| \cdot\|^{-2\gamma}$ and $\lambda^{-1} (\cdot) \| \cdot\|^{2-d}$, for $d=2$ and $d>2$, respectively.
\end{remark}
\begin{proof}
Consider for any $t> 0 $ and $\alpha\in\{0,1\}^d$
\begin{equation}\label{ennoe}\begin{aligned}
    \Ee&\left|\int_0^t \frac{\I_{\{W_s \in \Delta_\alpha\}}\,ds}{\lambda\left(\sqrt{n} W_s^1,\dots ,\sqrt{n} W_s^d\right)} - \frac{1}{\lambda_\alpha} \int_0^t \I_{\{ W_s \in \Delta_\alpha\}}\, ds \right|\\
    &\leq \int_0^t \int_{\Delta_\alpha} \left|\frac{1}{\lambda\left(\sqrt{n} x\right)} - \frac{1}{\lambda_\alpha}\right| e^{-\frac{\|x\|^2}{2s}} \frac{1}{(2 \pi s)^{d/2}} \,dx \,ds
    =: \Ee_n(\Delta_\alpha).
\end{aligned}\end{equation}
Since all octants can be dealt with in a similar fashion, we can restrict ourselves to $\Delta_0 = \real_{+}^{d}$. The following change of variables
\begin{gather*}
		\frac{\|x\|^2}{2ns} = z,\quad
		s = \frac{\|x\|^2}{2nz},\quad
		ds = - \frac{\|x\|^2}{2nz^2}\,dz,
\end{gather*}
turns the formula \eqref{ennoe} for the first octant $\Delta_0=\real_+^d$ into
\begin{gather}\label{better}
	\Ee_n(\real_{+}^{d})
	= \frac{1}{2\pi^{d/2} n} \int_{\real_{+}^{d}} \int_{\frac{\|x\|^2}{2nt}}^{\infty} e^{-z} z ^{d/2 -2} \,dz  \left|\frac{1}{\lambda(x)}-\frac{1}{\lambda_\alpha}\right| \|x\|^{2-d} \,dx.
\end{gather}
We will now consider two cases.

\medskip\noindent
\textbf{Case 1:} $d=2$. We have
\begin{equation*}
    \Ee_n(\real_{+}^{2})
    = \frac{1}{2 n \pi} \int_{\real_{+}^{2}}
    \int_{\frac{\|x\|^2}{2nt}}^{\infty} e^{-z} z ^{-1}\,dz
    \left|\frac{1}{\lambda(x)}-\frac{1}{\lambda_0}\right| dx.
\end{equation*}
According to assumption \ref{vi}, for any fixed $\epsilon > 0$ there exists some $R = R_\epsilon >0$ such that $\left| \frac{1}{\lambda(x)} - \frac{1}{\lambda_0} \right| < \epsilon$ for all $x\in \Delta_1$ with $\|x\|\geq R$. Thus,
\begin{align*}
    \Ee_n(\real_{+}^{2})
    &\leq \frac{1}{2n \pi} \int_{\cball(0,R)\cap\real_{+}^{2}} \left(\frac{2nt}{\|x\|^2}\right)^{\gamma}
    \int_{\frac{\|x\|^2}{2nt}}^{\infty} e^{-z}z^{-1+\gamma} \, dz  \, \frac{1}{\lambda(x)}\,dx \\
    &\qquad\mbox{} + \frac{1}{\lambda_0} \frac{1}{2 n \pi} \int_{\cball(0,R)\cap\real_{+}^{2}} \left( \frac{2nt}{\|x\|^2}\right)^{\gamma}
    \int_{\frac{\|x\|^2}{2nt}}^{\infty}  e^{-z}z^{-1+\gamma} \,dz \,dx \\
    &\qquad\mbox{} + \frac{\epsilon}{2n\pi}  \int_{(\cball(0,R))^c\cap\real_{+}^{2}}
    \int_{\frac{\|x\|^2}{2nt}}^{\infty}  e^{-z}z^{-1} \,dz \,dx  \\
    &=: \mathrm{I}_1(n) + \mathrm{I}_2(n)+\mathrm{I}_3(n).
\end{align*}
From \ref{vii} we get
\begin{equation}\label{ione}
	\mathrm{I}_1(n)
	\leq \frac{t^{\gamma}\Gamma(\gamma)}{\pi (2n)^{1-\gamma}} \int_{\|x\|\leq R} \frac{1}{\lambda(x)} \| x\|^{-2\gamma} \,dx
	\xrightarrow[n\to\infty]{}0.
\end{equation}
Introducing polar coordinates in $\real^{2}$, we see that
\begin{equation}\label{idvan}
	\mathrm{I}_2(n)
	\leq \frac{1}{\lambda_0} \frac{t^{\gamma}\Gamma(\gamma)}{\pi (2n)^{1- \gamma}} \int_{\|x\| \leq R} \|x\|^{-2\gamma} \,dx
	= \frac{C}{\lambda_0} \frac{t^{\gamma}\Gamma(\gamma)}{\pi (2n)^{1- \gamma}} \int_0^R \frac{1}{r^{2\gamma}} r\,dr
	\xrightarrow[n\to\infty]{} 0.
\end{equation}
The third expression, $\mathrm{I}_3(n)$, can be bounded as follows:
\begin{equation}\label{itrin}
    \begin{aligned}
    \mathrm{I}_3(n)
    &\leq \frac{\epsilon C}{2n\pi} \int_{R}^{\infty} r \int_{\frac{r^2}{2nt}}^{\infty} e^{-z} z^{-1} \,dz \,dr\\
    &\leq \frac{\epsilon C}{2n\pi} \int_{R}^{\infty} r e^{-\frac{r^2}{4nt}} \frac{\sqrt{2nt}}{r} \int_0^{\infty} e^{-\frac{z}{2}} z^{-1/2} \,dz \,dr\\
    &\leq \frac{\epsilon C \sqrt{2t}}{2\pi \sqrt{n}} \int_{R}^{\infty} e^{-\frac{r^2}{4nt}} \,dr \\
    &\stackrel{\#}{\leq} \frac{\epsilon C \sqrt{2t}}{2\pi \sqrt{n}} \int_0^{\infty} e^{-z} \frac{\sqrt{nt}}{\sqrt{z}} \,dz
    \leq \frac{\epsilon Ct}{\pi\sqrt{2}}\Gamma\left(\tfrac{1}{2}\right)
    = \frac{\epsilon Ct}{\sqrt{2}}.
\end{aligned}
\end{equation}
In the step marked with ``$\#$'' we use the following change of variables: $z=\frac{r^2}{4nt}$, $r=  \sqrt{z}\cdot 2\sqrt{nt}$ and $dr =   \frac{dz}{\sqrt{z}} \cdot \sqrt{nt}$.

\medskip\noindent\textbf{Case 2:} $d>2$.
As in Case 1, for any $\epsilon > 0 $ there exists some $R = R_\epsilon > 0$ such that
\begin{gather*}
	\left|\frac{1}{\lambda(x)} - \frac{1}{ \lambda_0}\right| < \epsilon\quad\text{for all\ } \|x\|>R.
\end{gather*}
In contrast to dimension $d=2$ we have $\int_0^{\infty} e^{-z} z^{d/2 -2}\,dz < \infty$ if $d \geq 3$. Therefore,
\begin{equation}\label{longest}
\begin{aligned}
    \Ee _n(\real_{+}^{d})
    &\leq \frac{1}{2n\pi^{d/2}} \int_{\|x\|\leq R} \int_{\frac{\|x\|^2}{2nt}}^{\infty} e^{-z} z^{d/2-2} \,dz \,\frac{\|x\|^{2-d}}{\lambda(x)}\,dx\\
    &\qquad\mbox{} + \frac{1}{\lambda_0 2n\pi^{d/2}} \int_{\|x\| \leq R} \int_{\frac{\|x\|^2}{2nt}}^{\infty} e^{-z} z^{d/2-2}   \,dz  \,\frac{\|x\|^{2-d}}{\lambda(x)} \, dx\\
    &\qquad\mbox{} + \epsilon \frac{1}{2n\pi^{d/2}} \int_{\|x\| > R} \int_{\frac{\|x\|^2}{2nt}}^{\infty} e^{-z} z^{d/2-2} \,dz \,\|x\|^{2-d} \,dx\\
    &=: \mathrm{J}_1(n)+\mathrm{J}_2(n)+\mathrm{J}_3(n).
\end{aligned}
\end{equation}
Clearly,
\begin{equation}\label{longest1}\begin{aligned}
	\mathrm{J}_1(n) &\leq \frac{\Gamma(d/2-1)}{2n \pi^{d/2}} \int_{\|x\| \leq R} \frac{\|x\|^{2-d}}{\lambda(x)} \,dx,\\ \mathrm{J}_2(n) &\leq \frac{\Gamma(d/2-1)}{2\lambda_0n \pi^{d/2}} \int_{\|x\| \leq R} \|x\|^{2-d} \,dx,\\
	\mathrm{J}_3(n) &\leq \frac{C \epsilon}{2n \pi^{d/2}} \int_R^{\infty} \int_{\frac{r^2}{2nt}}^{\infty} e^{-z} z^{d/2-2} \,dz \cdot r^{2-d} \cdot r^{d-1} \,dr .
\end{aligned}\end{equation}
It is not hard to see that  $\mathrm{J}_1(n)$ and $\mathrm{J}_2(n)$ vanish as $n\to \infty$. The third term $\mathrm{J}_3(n)$ has the following upper bound:
\begin{equation}\label{jitrin}\begin{aligned}
	\mathrm{J}_3(n)
	&\leq \frac{C \epsilon}{2n \pi^{d/2}} \int_R^{\infty} \int_{\frac{r^2}{2nt}}^{\infty} e^{-z} z^{d/2-2} \,dz \cdot r^{2-d} \cdot r^{d-1} \,dr\\
	&\leq \frac{C \epsilon}{2n \pi^{d/2}} \int_{R}^{\infty} r \int_{\frac{r^2}{2nt}}^{\infty} e^{-z}z^{d/2-2} \,dz \,dr \\
    &\leq \frac{C \epsilon}{2n \pi^{d/2}} \int_{R}^{\infty} e^{-\frac{r^2}{4nt}} r \,dr \int_0^\infty e^{-z/2} z^{d/2-2} \,dz\\
    &\leq C\epsilon \int_0^{\infty} e^{-z} dz
    \leq C  \epsilon.
\end{aligned}\end{equation}
Combining all calculations \eqref{better}--\eqref{jitrin},   and letting first $n\to\infty$ and then $\epsilon\to 0$,   we conclude that for any $d\geq 2$ and $x>0$
\begin{gather*}
	\Ee \left|\int_0^{x}  \frac{1 }{\lambda\left(\sqrt{n} W_s\right)} \,ds - \int_0^{x}\sum_{\alpha\in\{0,1\}^d} \I_{\{ W_s \in \Delta_\alpha \}}  \,ds \,\frac{1}{\lambda_\alpha}\right|
	\xrightarrow[n\to\infty]{}0,
\end{gather*}
whence, in probability,
\begin{gather*}
	\int_0^{x} \frac{1}{\lambda\left(\sqrt{n} W_s\right)} \,ds
	\xrightarrow[n\to\infty]{} \int_0^{x}\sum_{\alpha\in\{0,1\}^d} \I_{\{ W_s \in \Delta_\alpha\}} \,ds \,\frac{1}{\lambda_\alpha}
\end{gather*}
Note that under conditions of Theorem~\ref{separated} we had almost sure convergence, see \eqref{withp1}, but convergence in probability is sufficient to  guarantee  weak convergence \eqref{weakconv} of the finite-dimensional distributions.  From now on we can literally follow that part of the proof of Theorem~\ref{separated}, which establishes assumption \ref{v} from Theorem~\ref{th-cond-1}.
\end{proof}

We will now consider the second possibility, i.e.\ the situation when the asymptotic behaviour of $\lambda$ is not ``radial'' (in each octant), but $\lambda$ has   ``diagonal''   limits in each octant. This behaviour is expressed by assumption \ref{ii}, and it is a weaker assumption than \ref{vi}. Therefore, some additional   property   of the asymptotic behaviour of $\lambda$ apart from the   ``diagonals''   is needed, and it will be given by assumption \ref{iii}.
\begin{theorem}\label{lim-thm-2}
	Let $\lambda: \real^d \to (0, \infty)$ be a measurable function, satisfying the assumptions \ref{i}, \ref{ii}, \ref{iii} and \ref{vii}. Then we have on any interval $[0, T]$
	\begin{equation*}
		\left\{ \frac{B_{\tau_{nt}}}{\sqrt{n}} , t \in [0, T] \right\}
		\xRightarrow[n\to\infty]{} \left\{ W(\nu^{-1} (t)), t \in [0, T] \right\}.
	\end{equation*}
\end{theorem}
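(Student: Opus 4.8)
The plan is to apply Theorem~\ref{th-cond-1} in exactly the same way as in the proofs of Theorems~\ref{separated} and~\ref{theor1-1}: with $\phi(n)=\sqrt n$, $Z_n(t)=B_{nt}/\sqrt n$ and $\nu_n(t)=\tau_{nt}/n$. First one notes that under the present hypotheses the standing assumption of Section~\ref{sec-2} is in force, i.e.\ $S_B(\infty)=\int_0^\infty\lambda(B_s)^{-1}\,ds=\infty$ a.s., so that $\tau_t<\infty$ for every $t$ and $\nu_n$ is well defined: for $d=2$ this follows from the Kallianpur--Robbins argument together with the local integrability of $1/\lambda$ (pick $0\le f_1\le 1/\lambda$ with $f_1\in L^1(\real^2)$, $f_1>0$), while for $d>2$ it is precisely Lemma~\ref{infin1}, which uses \ref{i} and \ref{iii}. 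By Brownian scaling each $Z_n$ is again a $d$-dimensional Wiener process, so condition~\ref{v} of Theorem~\ref{th-cond-1} holds word for word as in the proof of Theorem~\ref{separated}. Condition~\ref{iv} reduces, via the exact computation of the joint law of $\big(\tau_{nt_i}/n,\,Z_n(t_i)\big)$ carried out there, to showing that for every $x>0$
\begin{gather*}
	\int_0^x\frac{ds}{\lambda(\sqrt n\,W_s)}
	\xrightarrow[n\to\infty]{\Pp}
	\int_0^x\upsilon(W_s)\,ds=:\nu(x),
	\qquad
	\upsilon=\sum_{\alpha\in\{0,1\}^d}\frac{1}{\lambda_\alpha}\I_{\Delta_\alpha},
\end{gather*}
for a $d$-dimensional Wiener process $W$; the inverse $\nu^{-1}$ then exists and is continuous, and the identification of the limit as $W(\nu^{-1}(\cdot))$ together with the remaining verifications is copied verbatim from the proof of Theorem~\ref{separated}. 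So everything comes down to this convergence in probability, and this is where the hypotheses \ref{ii}, \ref{iii}, \ref{vii} must do their work (note that \ref{ii}$+$\ref{iii} does \emph{not} imply the radial assumption \ref{vi}, so there is no direct reduction to Theorem~\ref{theor1-1}).

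To establish the convergence I would split $\real^d$, for a large parameter $M$ (to be fixed in terms of a given $\epsilon>0$) and a small $\delta>0$, into the ``diagonal far field'' $G_n:=\{y:\min_{1\le i\le d}|y_i|>M/\sqrt n\}$, the ball $\cball(0,\delta)$, and the $d$ ``slabs'' $P_n^i:=\{y:|y_i|\le M/\sqrt n,\ \|y\|>\delta\}$, and bound the three contributions to $\int_0^x\big|\lambda(\sqrt n\,W_s)^{-1}-\upsilon(W_s)\big|\,ds$ separately. On $G_n$ the rescaled point $\sqrt n\,y$ has all coordinates of modulus $>M$ and lies in the same octant as $y$, so assumption~\ref{ii} gives $\big|\lambda(\sqrt n\,y)^{-1}-\upsilon(y)\big|<\epsilon$ uniformly on $G_n$ once $M=M_\epsilon$, whence the $G_n$-part is at most $\epsilon x$; moreover $\int_0^x\upsilon(W_s)\I_{G_n^c}(W_s)\,ds\to0$ a.s., since $\upsilon$ is bounded and $\int_0^x\I_{\{|W_s^i|\le M/\sqrt n\}}\,ds\downarrow0$ a.s.\ (each coordinate of $W$ spends zero time at the level $0$). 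The ball $\cball(0,\delta)$ is handled exactly as the terms $\mathrm I_1,\mathrm I_2$ (for $d=2$) resp.\ $\mathrm J_1,\mathrm J_2$ (for $d>2$) in the proof of Theorem~\ref{theor1-1}: passing to the expectation, inserting the Gaussian transition density and substituting $z=\|x\|^2/(2ns)$, this contribution is dominated by $Cn^{\gamma-1}\int_{\cball(0,\delta)}\lambda^{-1}(u)\|u\|^{-2\gamma}\,du$ resp.\ $Cn^{-1}\int_{\cball(0,\delta)}\lambda^{-1}(u)\|u\|^{2-d}\,du$, which tends to $0$ by assumption~\ref{vii} (cf.\ the Remark after Theorem~\ref{theor1-1}).

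The genuinely new point, and what I expect to be the main obstacle, is the slab contribution $\sum_i\int_0^x\lambda(\sqrt n\,W_s)^{-1}\I_{P_n^i}(W_s)\,ds$: the slabs $\{|y_i|\le M/\sqrt n\}$ are unbounded and, since \ref{ii} governs $\lambda$ only where \emph{all} coordinates are large, they are exactly the region left uncovered by the $G_n$-estimate and the small-ball estimate. The available structure is that (a) the slab has width $2M/\sqrt n\to0$, so $\int_0^x\I_{P_n^i}(W_s)\,ds=O(1/\sqrt n)$ and the corresponding part of the limit $\nu$ is negligible, and (b) on $P_n^i$ one has $\|\sqrt n\,W_s\|>\delta\sqrt n$, so assumption~\ref{iii} applies there (as in Lemma~\ref{infin1}). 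The plan would be to combine (b) with Fubini in the coordinate $W^i$, Brownian scaling of $W^i$, and a one-dimensional occupation-time (local time) bound, using that by the local integrability of $1/\lambda$ the slice $a\mapsto 1/\lambda(\,\cdot\,,a,\,\cdot\,)$ is integrable on $[-M,M]$ for almost every value of the remaining coordinates, so that the slab contribution vanishes in probability uniformly enough in $n$. Making the interplay of \ref{ii} and \ref{iii} precise while carrying the $\sqrt n$-rescaling through the one-dimensional estimate is the delicate part; as in the proof of Theorem~\ref{theor1-1}, the cases $d=2$ and $d>2$ will need slightly different bookkeeping (the exponent $\gamma\in(0,1/2)$ for $d=2$, the finite integral $\Gamma(d/2-1)$ for $d>2$). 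Once the convergence in probability is in hand, the remainder of the proof is identical to that of Theorem~\ref{separated}.
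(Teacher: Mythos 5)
Your overall architecture coincides with the paper's: reduce condition \ref{iv} of Theorem~\ref{th-cond-1} to convergence in probability of $\int_0^x\lambda(\sqrt n\,W_s)^{-1}\,ds$ to $\nu(x)$, prove it in $L^1$ via the Gaussian density, and split space into a neighbourhood of the origin (handled by \ref{vii}), a diagonal far field where all coordinates are large (handled by \ref{ii}), and the remaining slabs. The paper does exactly this, but entirely in the rescaled variable $x=\sqrt n\,y$, decomposing $\real_+^d$ into the $2^d$ fixed boxes $\Theta_1=(0,R]^d$, $\Theta_2=(R,\infty)^d$ and the mixed boxes $(0,R]^k\times(R,\infty)^{d-k}$; in your coordinates these are a \emph{shrinking} ball of radius $O(R/\sqrt n)$ and slabs of width $R/\sqrt n$. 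This matters for your ball term: the bound $Cn^{\gamma-1}\int_{\cball(0,\delta)}\lambda^{-1}(u)\|u\|^{-2\gamma}\,du$ is only valid when $\cball(0,\delta)$ is a ball in the rescaled variable; for a fixed ball in the original variable the integral runs over $\cball(0,\delta\sqrt n)$, and \ref{vii} no longer yields decay (the region $R\leq\|u\|\leq\delta\sqrt n$ with some small coordinate is again a slab in disguise). So all three regions must be taken $n$-dependent, as in the paper.

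The genuine gap is the slab estimate, which you correctly single out as the crux but leave open, and the tools you propose for it point the wrong way. Assumption \ref{iii} bounds $\lambda$ from \emph{above}, hence $1/\lambda$ from \emph{below}; that is what Lemma~\ref{infin1} needs to force $S_B(\infty)=\infty$, but it cannot supply the \emph{upper} bound on $1/\lambda$ over the slab that your occupation-time argument would require, and the one-dimensional local-time/Fubini step is only gestured at. The paper's treatment is far more pedestrian: it takes $1/\lambda\leq C$ on each mixed box $(0,R]^k\times(R,\infty)^{d-k}$ (it attributes this bound to \ref{ii} — so your instinct that the slabs are exactly the region not covered by the stated far-field and small-ball hypotheses is sound, and any complete write-up must address this point rather than defer it) and then kills the purely geometric factor by direct computation: using $\|x\|^{2-d}\leq\|x\|_{d-k}^{2-d}$ and cutting the inner Gaussian integral at $\|x\|_{d-k}^2/(2nt)$, the unbounded slab costs only a factor $R^k$, and for $\gamma\in(1/2,1)$
\begin{gather*}
	\frac{R^k}{n}\int_{R}^{\infty} r^{1-k}\int_{\frac{r^2}{2nt}}^{\infty} e^{-z} z^{d/2-2}\,dz\,dr
	\leq \frac{C R^k t^{\gamma}}{n^{1-\gamma}}\int_{R}^{\infty} r^{1-k-2\gamma}\,dr
	\leq C\,n^{\gamma-1}\xrightarrow[n\to\infty]{}0,
\end{gather*}
the radial integral converging because $1-k-2\gamma<-1$ for every $k\geq 1$. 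No local times, no coordinatewise Fubini, and no Brownian scaling of a single coordinate are needed. To complete your proof you should either import this computation or produce the missing uniform upper bound on $1/\lambda$ over the slabs by some other means; as written, the central step of the theorem is not established.
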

\begin{proof}
Analysing the proofs of Theorems~\ref{separated} and~\ref{theor1-1} we see that it is enough to check that for any $\alpha\in\{0,1\}^d$
\begin{gather*}
	\Ee_n(\Delta_\alpha)\xrightarrow[n\to\infty]{} 0,
\end{gather*}
while all other necessary steps just repeat the proofs of these two theorems.

Since all octants can be treated in a similar way, we consider only $\Delta_0=\real_+^d$ and $\Ee_n(\real_{+}^{d})$ from \eqref{better}.   In view of the assumptions \ref{ii} and \ref{iii}, we pick some number $R>0$ and divide $\real_{+}^{d}$ into $2^d$ disjoint pieces: $\real_{+}^{d} = \bigcup_{m=1}^{2^d} \Theta_m$, where $\Theta_1=(0,R]^d$, $\Theta_2=(R,\infty)^d$, and $\Theta_3,\dots,\Theta_{2^d}$, which are Cartesian products each comprising $k$ factors of the form $(0,R]$ and $d-k$ factors of the form $(R,\infty)$, where $1\leq k<d$.  

The expression $\Ee_n(\Delta_0)$ is the sum of integrals $\int_{\Theta_1}+\int_{\Theta_2}+\dots+\int_{\Theta_{2^d}}$. Note that $\Theta_1\subset \cball(0,R)\cap\real_{+}^{d}$. Therefore, the first term $\int_{\Theta_1}$ can be bounded like $\mathrm{I}_1(n)$ and $\mathrm{I}_2(n)$ (see \eqref{ione} and \eqref{idvan}) in dimension $d=2$, and like $\mathrm{J}_1(n)$ and $\mathrm{J}_2(n)$ (see \eqref{longest} and \eqref{longest1}) in higher dimensions $d>2$.

Since $\Theta_2\subset\cball(0,R))^c\cap\real_{+}^{d}$, the expression $\int_{\Theta_2}$ can be bounded like $\mathrm{I}_3(n)$ (see \eqref{itrin}) and $\mathrm{J}_3(n)$ (see \eqref{jitrin}), in dimension $d=2$ and $d>2$, respectively.

It remains to bound $\int_{\Theta_m}$, $m=3,4,\dots, 2^d$. Without loss of generality we consider only the set $(0,R]^k\times (R,\infty)^{d-k}$, $1\leq k < d$, all other sets are dealt with in the same way. Write $\|\cdot\|_{n}$ for the Euclidean norm in $\real^{n}$, so that $\|x\|^2=\|x\|^2_{d-k}+\|x\|^2_{k}$.   Because of \ref{ii} we can pick some   $R>0$ such that $1/\lambda\leq C$ for some $C>0$ and all $x\in (0, R]^k \times (R, \infty)^{d-k}$. We have
\begin{align*}
    \frac{1}{n} &\int_{(0, R]^k \times (R, \infty)^{d-k}} \int_{\frac{\|x\|^2 }{2nt}}^{\infty} e^{-z} z^{d/2-2} \,dz
    \,\|x\|^{2-d}\,dx\\
    &\leq \frac{1}{n}  \int_{(0, R]^k \times (R, \infty)^{d-k}} \int_{\frac{\|x\|^2_{d-k} }{2nt}}^{\infty} e^{-z} z^{d/2-2} \,dz
    \,\|x\|_{d-k}^{2-d} \,dx\\
    &\leq \frac{R^k}{n}\int_{\|x\|_{d-k}\geq R}\int_{\frac{\|x\|^2_{d-k} }{2nt}}^{\infty} e^{-z} z^{d/2-2} \,dz
    \,\|x\|_{d-k}^{2-d} \,dx\\
    &\leq \frac{CR^k}{n}  \int_{R}^{\infty} \int_{\frac{r^2}{2nt}}^{\infty} e^{-z} z^{d/2-2} \,dz
    \,r^{d-k-1+2-d} \,dr.
\end{align*}
Choose $\gamma\in(1/2, 1)$ and observe that for every   $k=1,2,\dots,d-1$  
\begin{align*}
     \frac{CR^k}{n} &\int_{R}^{\infty} \int_{\frac{r^2}{2nt}}^{\infty} e^{-z} z^{d/2-2} \,dz   \,r^{d-k-1+2-d} \,dr\\
     &= \frac{CR^k}{n}  \int_{R}^{\infty}  r^{1-k}\int_{\frac{r^2}{2nt}}^{\infty} e^{-z} z^{-1} \,dz  \,dr\\
     &\leq \frac{CR^k(2nt)^\gamma}{n} \int_{R}^{\infty} r^{1-k-2\gamma}\int_{\frac{r^2}{2nt}}^{\infty} e^{-z} z^{d/2-2+\gamma} \,dz  \,dr\\
     &\leq C n^{\gamma-1} \int_{R}^{\infty}   r^{1-k-2\gamma} \,dr
     \leq Cn^{\gamma-1}\xrightarrow[n\to\infty]{} 0.
\qedhere
\end{align*}
\end{proof}

\begin{remark}\label{rem-gen}
Let us finally calculate the infinitesimal generator of the Markov process $Y^{(n)} = \left\{\frac{1}{\sqrt{n}}B_{\tau_{nt}},\,t\geq 0\right\}$, using the martingale representation \eqref{mart-repres}.  The sequence of processes
\begin{gather*}
	Y^{(n)}_t = \frac{B_{\tau_{nt}}}{\sqrt{n}} = \frac{Y_{nt}}{\sqrt{n}}
\end{gather*}
satisfies the equation
\begin{align*}
	Y^{(n)}_t
    = \frac 1{\sqrt n} \int_0^{nt}\lambda^{1/2}(Y_s)\,d\tilde{B}_s
    &= \frac 1{\sqrt n} \int_0^t\lambda^{1/2}(Y_{nz})\,d\tilde{B}_{nz}\\
    &= \int_0^t\lambda^{1/2}\left(\sqrt nY^{(n)}_{z}\right)\,d\tilde{B}^{(n)}_{z},
\end{align*}
where $\tilde{B}^{(n)}_{z} = n^{-1/2}\tilde{B}_{nz}$ is a $d$-dimensional Wiener process. Therefore, the generator of $Y^{(n)}$ is $A^{(n)}f(x) = \lambda\left(x\sqrt n\right) \Delta f(x)$; if $\lambda$ has limits in all octants, weak convergence of the processes $Y^{(n)}$ corresponds to the pointwise convergence of their generators. Let us describe the situation more precisely.
Denote by
\begin{gather*}
	\partial \real^{d}
	= \left\{x=(x_1,\ldots, x_d) \in \real^{d}: \text{at least one coordinate\ } x_i =0 \right\}.
\end{gather*}
Consider any coordinate of $Y^{(n)}_t = \left(Y^{(n);1}_t, \dots, Y^{(n);d}_t\right)$:
\begin{equation}\nonumber
	Y^{(n);i}_t
	= \int_0^t \lambda^{1/2} \left(\sqrt{n} Y^{(n)}_z\right) \,d\tilde{B}^{(n);i}_{z}.
\end{equation}
This process, being a local continuous martingale, has a local time, which we denote by  $L^{(n);i}_x  (t)$. For any $1 \leq i, k \leq d$ and any $C>0$
\begin{align*}
	\int_0^t &\left( \lambda \left( \sqrt{n}  Y^{(n)}_z  \right) \wedge C \right) \I_{\left\{Y^{(n);k}_z  =0  \right\}}\,dz \\
    &=  \int_0^t \frac{\left( \lambda \left(\sqrt{n}  Y^{(n)}_z  \right) \wedge C \right)}{\lambda \left(\sqrt{n}  Y^{(n)}_z  \right) }  \I_{\left\{ Y^{(n);k}_z  =0  \right\}} \,d\langle Y^{(n);k}\rangle_z \\
    &\leq  \int_{\real}  \I_{\{ 0\} }(x)  L^{(n);  k }_x  (t) \,dx
    = 0.
\end{align*}
This shows that
\begin{equation*}
	\int_0^t \lambda \left( \sqrt{n}  Y^{(n)}_z  \right) \I_{\{ Y^{(n)}_z \in \partial  \real^{d}  \}} \,dz =0,
\end{equation*}
whence
\begin{equation*}
	\int_0^t  \sqrt{\lambda \left(\sqrt{n}  Y^{(n)}_z\right)} \,d\tilde{B}^{(n);i}_{z}
	= \int_0^t \sqrt{\lambda \left(\sqrt{n}  Y^{(n)}_z\right)} \I_{\{ Y^{(n)}_z   \notin   \partial\real^{d}\}}\, d\tilde{B}^{(n);i}_{z}.
\end{equation*}
Since   $\lambda (\sqrt{n} x) \I_{\{  x \notin \partial  \real^{d} \}} \to \sum_{\alpha\in\{0,1\}^d}   \lambda_\alpha  \I_{\{ x \in \Delta_\alpha \}}$ as $n\to\infty$, we see that the limit process satisfies the following SDE
\begin{equation}\label{limit-gen}\begin{aligned}
	Y_t
	&= \int_0^t \sum_{\alpha\in\{0,1\}^d}  \sqrt{\lambda_\alpha}   \I_{\{ Y_s \in \Delta_\alpha \}}
	\, d\tilde{B}_s\\
	&= \int_0^t \left(\sum_{\alpha\in\{0,1\}^d} \sqrt{\lambda_\alpha}   \I_{\{ Y_s \in \Delta_\alpha \}}
	+ \I_{\{ Y_s \in \partial  \real^{d}\}}\right) d\tilde{B}_s
\end{aligned}\end{equation}
with some $d$-dimensional  Wiener process $\tilde{B}$. If the coefficients $\lambda_\alpha $ are strictly positive, then the diffusion coefficient of this equation is bounded, measurable and uniformly bounded away from $0$. According to the well-known  Krylov theorem (see, e.g.\ \cite[Proposition~1.15]{cher}), the equation \eqref{limit-gen} has a weak solution which is unique in law.
\end{remark}


\begin{thebibliography}{10}
\bibitem{amp}
G.\ Ascione, Y.\ Mishura, and E.\ Pirozzi,
Time-changed fractional Ornstein-Uhlenbeck process,
\textit{Fractional Calculus and Applied Analysis},
\textbf{23}, 450--483 (2020).

\bibitem{bog-et-al}
V.I.\ Bogachev, N.V.\ Krylov, M.\ R\"{o}ckner, and S.V.\ Shaposhnikov,
\textit{Fokker–Planck–Kolmogorov Equations},
American Mathematical Society, Math. Surveys Monogr. 207, Providence (2015).

\bibitem{BSW}
B.\ B\"{o}ttcher, R.\ Schilling, and J.\ Wang,
\textit{L\'{e}vy Matters III},
Springer, Lecture Notes Mathematics 2099, Berlin (2013).

\bibitem{cher}
A.S.\ Cherny and H.J.\ Engelbert,
\textit{Singular Stochastic Differential Equations},
Springer, Lecture Notes Math.\ 1858, Berlin (2005).

\bibitem{dvorerd}
A.\ Dvoretzky and P.\ Erdös,
Some problems on random walk in space, in:
\textit{Proceedings of the second Berkeley symposium on mathematical statistics and probability} vol.\ 2,
University of California Press, Berkeley and Los Angeles, 353--368 (1951).

\bibitem{fot}
M.\ Fukushima, Y.\ Oshima and M.\ Takeda,
\textit{Dirichlet Forms and Symmetric Markov Processes},
De Gruyter, Studies in Mathematics 19, Berlin (2011).

\bibitem{harl}
B.\ Harlamov,
\textit{Continuous Semi-Markov Processes},
John Wiley \& Sons, Appl.\ Stoch.\ Methods Ser., Hoboken (2013).

\bibitem{Kallianpur}
G.\ Kallianpur and H.\ Robbins,
Ergodic property of the Brownian motion process,
\textit{Proceedings Natl.\ Acad.\ Sci.}, \textbf{39}, 525--533 (1953).

\bibitem{KMS}
Y.\ Kondratiev, Y.\ Mishura, and R.L.\ Schilling,
Asymptotic behaviour and functional limit theorems for a time changed Wiener process,
\textit{Statist.\ Probab.\ Letters}, \textbf{170}, 108997 (2021).

\bibitem{kurtz}
T.G.\ Kurtz and P.\ Protter,
Weak limit theorems for stochastic integrals and stochastic differential equations,
\textit{Ann.\ Probab.}, \textbf{19}, 1035--1070 (1991).

\bibitem{Magdziarz}
M.\ Magdziarz and R.\ Schilling,
Asymptotic properties of Brownian motion delayed by inverse subordinators,
\textit{Proc.\ Amer.\ Math.\ Soc.},
\textbf{143}, 4485--4501 (2015).

\bibitem{ole-rad}
O.A.\ Ole\u{\i}nik and E.V.\ Radkevi\v{c},
\textit{Second Order Equations with Nonnegative Characteristic Form},
American Mathematical Society, Providence, New York (1973).


\bibitem{Silvestrov}
D.S.\ Silvestrov,
\textit{Limit Theorems for Randomly Stopped Stochastic Processes},
Springer, Probab.\ Appl., London (2004).

\bibitem{spitz}
F.\ Spitzer,
Some theorems concerning 2-dimensional Brownian motion, in:
\textit{Random Walks, Brownian Motion, and Interacting Particle Systems},
Birkhäuser, Boston, 21--31 (1991).

\end{thebibliography}
\end{document}